\numberwithin{equation}{section}
\newtheorem{theorem}{Theorem}[section]
\newtheorem{lemma}[theorem]{Lemma}
\newtheorem{proposition}[theorem]{Proposition}
\newtheorem{corollary}[theorem]{Corollary}
\theoremstyle{definition}
\newtheorem{definition}[theorem]{Definition}
\newtheorem{algorithm}[theorem]{Algorithm}
\newtheorem{def-prop}[theorem]{Definition-Proposition}
\newtheorem{remark}[theorem]{Remark}
\newtheorem{example}[theorem]{Example}
\newtheorem*{acknowledgement}{Acknowledgements}
\newtheorem{setup}[theorem]{Set-up}
\newtheorem*{Mysketch}{Sketch of proof} 
  {\pushQED{\qed}\begin{Mysketch}}
  {\popQED\end{Mysketch}}
\DeclareMathOperator{\depth}{depth}
\DeclareMathOperator{\Ext}{Ext}
\DeclareMathOperator{\ini}{in}
\DeclareMathOperator{\pol}{pol}
\def\m{{\mathfrak m}}
\newcommand{\lcm}[1]{\ensuremath{{\rm{lcm}}{#1}}}
\newcommand{\ul}[1]{\ensuremath{\underline{#1}}}
\begin{document}

\title{Initially regular sequences and depths of ideals}

\author{Louiza Fouli}
\address{Department of Mathematical Sciences \\
New Mexico State University\\
P.O. Box 30001 \\
Department 3MB \\
Las Cruces, NM 88003}
\email{lfouli@nmsu.edu}
\urladdr{http://www.web.nmsu.edu/~lfouli}

\author{Huy T\`ai H\`a}
\address{Department of Mathematics \\
Tulane University \\
6823 St. Charles Avenue \\
New Orleans, LA 70118}
\email{tha@tulane.edu}
\urladdr{http://www.math.tulane.edu/~tai/}

\author{Susan Morey}
\address{Department of Mathematics \\
Texas State University\\
601 University Drive\\
San Marcos, TX 78666}
\email{morey@txstate.edu}
\urladdr{http://www.txstate.edu/~sm26/}

\keywords{regular sequence, depth, projective dimension, monomial ideal, edge ideal, Gr\"{o}bner basis, initial ideal}
\subjclass[2010]{13C15, 13D05, 05E40, 13F20, 13P10}

\begin{abstract}
For an arbitrary ideal $I$ in a polynomial ring $R$ we define the notion of initially regular sequences on $R/I$. These sequences share properties with regular sequences. In particular, the length of an initially regular sequence provides a lower bound for the depth of $R/I$. 
Using combinatorial information from the initial ideal of $I$ we construct sequences of linear polynomials that form initially regular sequences on $R/I$. 
We identify situations where initially regular sequences are also regular sequences, and we show that our results can be combined with polarization to improve known depth bounds for general monomial ideals.
\end{abstract}

\maketitle

\section{Introduction}\label{intro}

A fundamental invariant in commutative algebra and algebraic geometry is the depth of a module. It appears naturally in the characterization of Cohen-Macaulay rings and modules or, more generally, in Serre's criteria $(S_k)$'s (cf. \cite{BH, Serre}). The notion of depth was initially introduced as a homological invariant (under the name of \emph{homological codimension} --- see \cite{AB}). Specifically, for a finitely generated module $M$ over a local (or graded) ring $R$ with a maximal (homogenous) ideal $\m$, the depth of $M$ is
$$\depth M := \min \{d \mid \Ext^d_R(R/\m, M) \not= 0\}.$$
From duality theory, depth is also known to be closely related to local cohomology (cf. \cite{Gro}). Particularly,
$\depth M = \min\{ d \mid H^d_\m(M) \not= 0\}.$

Our work is driven by the important fact that $\depth M$ is measured by the maximum length of an $M$-\emph{regular sequence} in $\m$ (a sequence of elements $f_1, \ldots, f_d \in \m$ is said to be an $M$-regular sequence if for each $i$, $f_i$ is a non-zerodivisor on $M/(f_1, \ldots, f_{i-1})M$). Making use of a regular element (or sequence) is an essential tool in the proofs of many important results, especially when the technique involves taking hyperplane sections. In practice, however, finding a concrete description of regular sequences is a difficult task.

Our focus in this paper is on modules of the form $R/I$, where $R$ is a polynomial ring and $I \subseteq R$ is an arbitrary ideal. We introduce a new notion, called an \emph{initially regular sequence} on $R/I$, whose concrete description is tractable and whose length gives an effective lower bound for the depth of $R/I$. 

During the past two decades, many papers have appeared with various approaches to computing lower bounds for the depth, or equivalently upper bounds for the projective dimension, of $R/I$ for a \emph{squarefree} monomial ideal $I$ (cf. \cite{DS,  DS2, FH, HHKO1, LM2,  P2, P3}). The general idea has been to associate to the ideal $I$ a graph or hypergraph $H$ and use \emph{dominating} or \emph{packing} invariants of $H$ to bound the depth of $R/I$. In these works, the bounds are obtained using nonconstructive techniques that do not generally provide regular sequences.
As a consequence of our work, we provide new bounds on depth of $R/I$ for any arbitrary ideal $I$ in $R$. If, in addition, $I$ is a squarefree monomial ideal, then our bounds compare favorably to previously known combinatorial bounds. Perhaps the most interesting application of our results occurs when the generators of the initial ideal of $I$ have high degrees, in which case our bounds are usually substantial improvements over previously known bounds. Also,  our results include explicit sequences that behave similarly to regular sequences and realize the depth bounds. Furthermore, we show that polarization can be combined with our techniques to produce longer initially regular sequences that effectively compute the depth.

The motivation for our definition  comes from the fact that $\depth R/I \geq \depth R/\ini(I)$, where $\ini(I)$ is the initial ideal of $I$ with respect to any order, see \cite[Theorem 3.3.4]{HH}.  We give the following simplified definition of an initially regular sequence (see Definition~\ref{ini reg} for a more general version).

\begin{definition} \label{ini reg simple}
Let $R$ be a polynomial ring over a field and let $I \subseteq R$ be a proper ideal. A sequence of nonconstant polynomials $f_1, \ldots, f_q$ is said to be an \emph{initially regular sequence} on $R/I$ if for each $i = 1, \ldots, q$, $f_i$ is a regular element on $R/I_i$, where $I_i = \ini (I_{i-1}, f_{i-1})$ (here, by convention, $I_1 = \ini(I)$), where the initial ideals are taken with respect to a fixed monomial term order.
\end{definition}

The idea of passing to initial ideals is not new; it was even used by Macaulay in his 1927 result and was his main reason for introducing monomial orders. He proved that the Hilbert function of $R/I$ is the same as the Hilbert function of $R/\ini(I)$, see \cite[Theorem~15.26]{Eis}. More recently, Conca and Varbaro proved that $\depth R/I =\depth R/\ini(I)$, provided that $\ini(I)$ is squarefree \cite[Corollary~2.7]{CV}.

The following example illustrates our definition.

\begin{example}
Let $R=\mathbb{Q}[x_1,x_2,x_3,x_4,x_5]$. Let $I=(x_1x_2x_3+x_3x_4,x_2x_5+x_1x_2x_4,x_3x_5)$ be a polynomial ideal in $R$. Using Macaulay~2~\cite{M2} we determine that $\depth R/I=2$. Notice that $\ini(I)=(x_3x_5, x_3x_4^2, x_1x_2x_4 , x_1x_2x_3)$ with respect to the graded reverse lexicographic order with $x_1>x_2>x_5>x_4>x_3$. Let  $f=x_1+x_2$ and $g=x_5+x_3$. Then, $f,g$ (and $g, f$) is an initially regular sequence on $R/I$. In fact, $f,g$ and $g,f$ are also both regular sequences on $R/I$.
\end{example}

One can see that using Definition~\ref{ini reg simple} and the fact that $\depth R/I \geq \depth R/\ini(I)$, if $f_1, \ldots, f_q$ is an initially regular sequence on $R/I$, then $\depth R/I \ge q$,  Proposition~\ref{ini reg bound}. The task at hand is then to explicitly construct initially regular sequences. Although taking repeated initial ideals appears to be rather cumbersome, we show in one of our main results that some basic linear polynomials  will form an initially regular sequence with respect to an appropriate term order, giving both a combinatorial way to find a lower bound on the depth and a sequence of elements that in many cases is a regular sequence, and in others shares properties with one. For a monomial ideal $I \subseteq R$ and a variable $x$ of $R$, $d_x(I)$ denotes the maximum power of $x$ appearing in the minimal monomial generators of $I$. The following Theorem is a simplified version of Theorem~\ref{main}.

\begin{theorem} {\rm{(see Theorem~\ref{main})}} \label{main simple}
 Let $I$ be an ideal in a polynomial ring $R$ and $>$ a term order. Suppose that $\{b_{i,j} \mid 1 \leq i \leq q, 0 \leq j \leq t_i\}$ are distinct variables of $R$ such that $b_{i,0}> b_{i,j}$ for all $1\leq i \leq q$ and all $j$. We further assume that: 
\begin{enumerate}
\item $d_{b_{i,j}}(\ini_{>}(I)) \leq 1$, for all $i \geq 1$ and $j \geq 1$; and
\item for each $i = 1, \ldots, q$, if $M$ is a monomial generator of $\ini_{>}(I)$ and $b_{i,0}$ divides $M$, then there exists a $j \geq 1$ such that $b_{i,j}$ divides $M$.
\end{enumerate}
Let $f_i = \sum_{j=0}^{t_i} b_{i,j}$, for $1 \leq i \leq q$. Then $f_1, \ldots, f_q$ is an initially regular sequence on $R/I$.  In particular, $\depth R/I \geq q$.

\end{theorem}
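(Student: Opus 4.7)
The plan is to induct on $q$, with induction hypothesis at stage $i$ that $I_i = \ini(I_{i-1}, f_{i-1})$ (where $I_1 = \ini(I)$) is a monomial ideal satisfying conditions~(1) and~(2) with respect to the remaining variables $\{b_{k,j} : i \leq k \leq q,\ 0 \leq j \leq t_k\}$. It will suffice to show at each stage that (a) $f_i$ is a non-zerodivisor on $R/I_i$, and (b) $I_{i+1}$ again satisfies the hypotheses. The depth bound then follows from Proposition~\ref{ini reg bound}.

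For (a), I would argue via associated primes. Since $I_i$ is a monomial ideal, each associated prime $P$ is generated by variables, and since $f_i$ is a sum of the distinct variables $b_{i,0}, \ldots, b_{i,t_i}$, membership $f_i \in P$ is equivalent to $\{b_{i,0}, \ldots, b_{i,t_i}\} \subseteq P$. Writing $P = I_i : m$ for a monomial $m \notin I_i$, I would extract from $b_{i,0} m \in I_i$ a minimal generator $M$ of $I_i$ dividing $b_{i,0} m$; since $m \notin I_i$, necessarily $b_{i,0} \mid M$, and condition~(2) supplies some $b_{i,j_0} \mid M$ with $j_0 \geq 1$, forcing $b_{i,j_0} \mid m$ because $b_{i,j_0} \neq b_{i,0}$. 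Then from $b_{i,j_0} m \in I_i$ I would find a generator $N$ dividing $b_{i,j_0} m$; condition~(1) caps the $b_{i,j_0}$-degree of $N$ at $1$, and in both cases ($b_{i,j_0} \nmid N$, or $N = b_{i,j_0} N'$ with $b_{i,j_0} \nmid N'$) a short check yields $N \mid m$, contradicting $m \notin I_i$.

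For (b), I would describe a Gr\"obner basis of $(I_i, f_i)$ via Buchberger's algorithm. The only non-trivial S-pairs occur between $f_i$ and a generator $M$ of $I_i$ divisible by $b_{i,0}$, producing the polynomial $-(M/b_{i,0}) \sum_{j \geq 1} b_{i,j}$; iterating, I expect every minimal generator of $I_{i+1}$ to be either a generator of $I_i$ not involving $b_{i,0}$, the variable $b_{i,0}$ itself, or a monomial of the form $(M/b_{i,0}^e) \cdot u$ for some generator $M$ of $I_i$ with $b_{i,0}^e \mid M$ and a monomial $u$ in $\{b_{i,j} : j \geq 1\}$. Because the variables $b_{k,j}$ with $k \geq i+1$ are disjoint from $\{b_{i,0}, \ldots, b_{i,t_i}\}$, the $b_{k,j}$-degree of each such generator matches that of the underlying $M$, so condition~(1) propagates; and for condition~(2), divisibility of a new generator by $b_{k,0}$ forces divisibility of $M$ by $b_{k,0}$, whence condition~(2) for $I_i$ supplies the required $b_{k,j}$.

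The main obstacle I anticipate is justifying the factorization description of minimal generators of $I_{i+1}$ under iterated S-pair reductions, since all the disjointness arguments for the remaining variables hinge on it. Once this structural claim is established, parts (a) and (b) combine to close the induction and confirm that $f_1, \ldots, f_q$ is an initially regular sequence on $R/I$.
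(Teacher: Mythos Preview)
Your proposal follows the same inductive strategy as the paper: replace $I$ by $\ini(I)$, then at each step verify that $f_i$ is regular on $R/I_i$ (your (a), the paper's Lemma~\ref{regular}) and that conditions (1)--(2) propagate to $I_{i+1}$ (your (b)). Your associated-primes argument for (a) is a clean variant of the paper's direct route through Lemmas~\ref{r=1 case} and~\ref{regular}, and reaches the same contradiction.

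For (b), the obstacle you flag is exactly where the paper invests its effort, and your expected form ``$(M/b_{i,0}^e)\cdot u$ for a single generator $M$'' is slightly too narrow: iterated $S$-pairs among newly produced elements can yield Gr\"obner basis elements whose part outside $k[b_{i,0},\ldots,b_{i,t_i}]$ is an \emph{lcm} of the corresponding parts of several generators of $I_i$, not just one. The paper handles this by developing the notion of $(R_1,R_2)$-factorable ideals (Lemmas~\ref{S-resultant}, \ref{reduction step} and Proposition~\ref{GrobnerArgument}): every reduced Gr\"obner basis element of $(I_i,f_i)$ factors as $Ng$ with $g\in R_2=k[b_{i,0},\ldots,b_{i,t_i}]$ and $N\in R_1$ an lcm of $R_1$-factors of the original generators. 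This weaker structural statement still carries your downstream argument, since $b_{k,0}\mid N$ forces $b_{k,0}$ into some original $R_1$-factor (hence into a generator of $I_i$, whence condition (2) supplies the needed $b_{k,j}\mid N$), and $d_{b_{k,j}}(N)$ is bounded by the maximum over the original $R_1$-factors (Corollary~\ref{degrees} and Lemma~\ref{degreesFixed}).
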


In the more general version, Theorem~\ref{main}, different term orders can be used for each set of $b_{i,j}$ with $ 1\leq i \leq q$ as well as a different term order to compute $\ini_{>}(I)$. The next example shows how to utilize Theorem~\ref{main} to obtain a maximal regular sequence that realizes the depth.
\begin{example}\label{extended pentagon}
Let $I = (x_1x_2+x_1x_3, x_2x_3+x_3^2, x_3x_4, x_4x_5, x_5x_1, x_1x_6, x_5x_7, x_7x_8) \subset R=\mathbb{Q}[x_1, \ldots, x_8]$. Using Macaulay 2 \cite{M2}, we see that $\depth R/I = 3$. This value of $\depth R/I$ can also be obtained as follows.
\begin{enumerate}
\item Choose the graded reverse lexicographic order in $R$ with $x_1 > \ldots > x_8$. Then
$J = \ini(I) =(x_1x_2,x_2x_3,x_3x_4,x_4x_5,x_1x_5,x_1x_6,x_5x_7,x_7x_8)$ is the edge ideal of the the graph $H = H(J)$ depicted below.

\begin{tikzpicture}

    \tikzstyle{point}=[circle,thick,draw=black,fill=black,inner sep=0pt,minimum width=4pt,minimum height=4pt]

 \node (a)[point,label={[xshift=-0.4cm, yshift=-.10 cm]: $x_1$}] at (5,2.5) {};

    \node (b)[point,,label={[xshift=-0.4cm, yshift=-.10 cm]:$x_2$}] at (3,2.5) {};

\node (d)[point,,label={[xshift=0cm, yshift=0.1 cm]: $x_4$}] at (4,0.5) {};
  \node(c)[point,,label={[xshift=-0.4cm, yshift=-.20 cm]:$x_3$}] at (3,1.5) {};

   \node (e)[point,label={[xshift=-0.4cm, yshift=-.10 cm]:$x_5$}] at (5,1.5) {};

    \node (f)[point,label={[label distance=0cm]3:$x_6$}] at (6,3.5) {};

\node (g)[point,,label={[xshift=0 cm, yshift=0 cm]: $x_7$}] at (6.5,1.5) {};
  \node(h)[point,,label={[xshift=0.4cm, yshift=-0.3 cm]:$x_8$}] at (7.5,2.5) {};

\draw (a.center) -- (b.center) -- (c.center) -- (d.center) -- (e.center) -- (a.center);
\draw (a.center) -- (f.center);
\draw (e.center) -- (g.center) -- (h.center);
\end{tikzpicture}

\item Applying Theorem~\ref{main} with any term order on $R$ in which $x_6 > x_1 > x_8 > x_7 > x_4 > x_5 > x_3 $, we get that
    $$x_6+x_1, x_8+x_7, x_4+x_5+x_3$$
    forms an initially regular sequence on $R/I$. Thus,  $\depth R/I \geq 3$ by Theorem~\ref{main}. In fact,  $x_6+x_1, x_8+x_7, x_4+x_5+x_3$ is a regular sequence on $R/J$ by Corollary~\ref{regular sums of two}. Moreover, one can check using Macaulay 2 \cite{M2} that this sequence is also regular on $R/I$ and thus realizes the depth of $R/I$. 
\end{enumerate}
\end{example}

Although Theorem~\ref{main} (see also Algorithm~\ref{alg}) is particularly easy to visualize when $\ini(I)$ is the edge ideal of a graph $H$, the result is more effective when $H$ is a hypergraph, see Examples~\ref{tetrahedron} and ~\ref{hypergraph example}. In the case of a graph, the process is equivalent to packing \emph{stars} in $H$, where a star in $H$ is a subgraph of $H$ consisting of the closed neighborhood of a vertex $x$ and all edges containing $x$. 

The organization of the paper is as follows. In Section~\ref{sec.ini}  we identify situations where initially regular sequences are also regular sequences, see Theorem~\ref{regular equiv ini regular} and Corollary~\ref{regular sums of two}. In Section~\ref{sec.construction} we analyze properties of Gr\"{o}bner bases in a series of lemmas that we use to prove  the main result of our article, Theorem~\ref{main}. In Algorithm~\ref{alg}, we give a combinatorial interpretation of Theorem~\ref{main}, providing a way to construct initially regular sequences. We end Section~\ref{sec.construction} with some examples that illustrate how our results compare to previously known bounds, see Examples~\ref{higher power example}, \ref{better than epsilon}, and \ref{better than epsilon and tau, hypergraph}.

We devote Section~\ref{extensions} to finding  extensions of Theorem~\ref{main}. We show that certain sums of variables corresponding to disjoint pairs of leaves also yield regular sequences, see Theorem~\ref{leaf pairs regular}. We discuss when regular sequences and initially regular sequences can be combined to give longer initially regular sequences and therefore better estimates for the depth, see Proposition~\ref{append a seq}, Theorem~\ref{iterate repeated neighbors}, and Theorem~\ref{generalized iterate}. 
In addition, these extensions allow us to show that our depth bound is sufficiently robust to be used with polarizations, see Theorem~\ref{polarization}. Finally, in Section~\ref{applications} we give further applications of our results. In particular, we return to the case of non-monomial ideals and apply our techniques to ideals arising  from different combinatorial settings.

\begin{acknowledgement} The first author was partially supported by a grant from the Simons Foundation (grant \#244930). The second named author is partially supported by Simons Foundation (grant \#279786) and Louisiana Board of Regents (grant \#LEQSF(2017-19)-ENH-TR-25).
\end{acknowledgement}

\section{Regular and initially regular sequences} \label{sec.ini}

Throughout the paper, $R = k[x_1, \ldots, x_n]$ is a polynomial ring over an arbitrary field $k$. In this section, we show that the length of an initially regular sequence on $R/I$ gives a lower bound for $\depth R/I$, and discuss special situations where initially regular sequences are also regular sequences.

The following is a more general version of Definition~\ref{ini reg simple}. In many cases, a single fixed term order will be used to create an initially regular sequence. If only one term order is specified, it will be understood that all term orders used are the same and that all initial ideals are formed with respect to the fixed term order. More generally, one can take different term orders at every step of the construction of an initially regular sequence. For unexplained terminology, we refer the reader to \cite{AL}, \cite{BH}, and \cite{HH}.

\begin{definition} \label{ini reg}
Let $R$ be a polynomial ring over a field, and fix a set of term orders $>_1 , \ldots , >_{q}$ on $R$. Let $I \subseteq R$ be a proper ideal. A sequence of nonconstant polynomials $f_1, \ldots, f_q$ is said to be an \emph{initially regular sequence} on $R/I$ if for each $i = 1, \ldots, q$, $f_i$ is a regular element on $R/I_i$, where $I_i = \ini_{>_i} (I_{i-1}, f_{i-1})$ (here, by convention, $I_1 = \ini_{>_1}(I)$).
\end{definition}

In general, there is no relationship between an element being regular and initially regular. Given an ideal in a polynomial ring, an element can be both regular and initially regular, either one without the other, or neither. However, our first result shows that initially regular sequences give a lower bound on the depth.

\begin{proposition}\label{ini reg bound}
Let $I$ be an ideal in a polynomial ring $R$. If $f_1, \ldots, f_q$ form an initially regular sequence on $R/I$ with respect to a sequence of term orders $>_1, \ldots , >_q$, then
$$\depth R/I \geq q.$$
\end{proposition}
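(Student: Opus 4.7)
The plan is to induct on $q$, chaining together two standard facts. The first is the inequality $\depth R/J \geq \depth R/\ini_{>}(J)$ valid for any proper ideal $J \subseteq R$ and any term order $>$, which is [HH, Theorem 3.3.4] quoted in the introduction. The second is the elementary identity $\depth R/(J,f) = \depth R/J - 1$ whenever $f$ is a regular element on $R/J$ lying in the graded maximal ideal $\m$. Together these two inputs produce a ``two-step'' depth drop at each iteration of the construction in Definition~\ref{ini reg}.

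The base case $q = 0$ is automatic: because $I$ is proper, $R/I \neq 0$ and so $\depth R/I \geq 0$. For the inductive step, let $f_1, \ldots, f_q$ be initially regular on $R/I$ with associated ideals $I_1, \ldots, I_q$. Applying the first fact with $J = I$ gives $\depth R/I \geq \depth R/I_1$, and applying the second with $J = I_1$ and $f = f_1$ gives $\depth R/I_1 = \depth R/(I_1,f_1) + 1$. Next I would unwind Definition~\ref{ini reg} to observe that $f_2, \ldots, f_q$ is an initially regular sequence on $R/(I_1, f_1)$ with respect to the shifted orders $>_2, \ldots, >_q$: by construction the first associated ideal for this shorter sequence is $\ini_{>_2}(I_1, f_1) = I_2$, and each subsequent ideal in the shifted sequence is literally the $I_j$ of the original. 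The inductive hypothesis then gives $\depth R/(I_1, f_1) \geq q - 1$, and combining yields
\[
\depth R/I \;\geq\; \depth R/I_1 \;=\; \depth R/(I_1, f_1) + 1 \;\geq\; q.
\]

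I do not expect a genuine obstacle; the only point to pin down is the interpretation of ``regular element'' in Definition~\ref{ini reg}, since the depth-dropping identity (the second fact above) requires $f_i$ not only to be a nonzerodivisor on $R/I_i$ but also to lie in $\m$. In the constructions that generate the paper's applications the $f_i$ are sums of distinct variables and hence automatically in $\m$, so this is harmless; in the abstract statement one should either absorb ``$f_i \in \m$'' into the meaning of regular element or localize at $\m$ before invoking the identity. Once that convention is fixed, the induction runs mechanically.
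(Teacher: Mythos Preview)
Your proof is correct and essentially identical to the paper's: both induct on $q$, use $\depth R/J \geq \depth R/\ini(J)$ together with the depth-drop identity for a regular element, and observe that $f_2,\ldots,f_q$ is initially regular on $R/(I_1,f_1)$. The only cosmetic differences are that the paper starts its induction at $q=1$ rather than $q=0$, and that your caveat about $f_i\in\m$ (needed for the depth-drop identity) is a fair technical point which the paper handles implicitly via the word ``nonconstant'' in Definition~\ref{ini reg}.
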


\begin{proof}
We proceed by induction on $q$. If $q=1$, then $f_1$ is regular on $R/\ini_{>_1}(I)$, and so by \cite[Theorem 3.3.4]{HH}, we have $\depth R/I \geq \depth R/\ini_{>_1}(I) \geq 1$.

Suppose that $q \geq 2$. Then $f_2, \ldots, f_q$ is an initially regular sequence on $R/(\ini_{>_1}(I),f_1)$ and by induction, $\depth R/(\ini_{>_1}(I),f_1) \geq q-1$.  Thus, by \cite[Theorem 3.3.4]{HH} again, we have $\depth R/I \geq \depth R/\ini_{>_1}(I)=\depth R/(\ini_{>_1}(I),f_1)+1 \geq q$.
\end{proof}

In light of \cite[Theorem 3.3.4]{HH}, which shows $\depth R/I \geq \depth R/\ini(I)$ for whichever term order is selected for the first step, we will often simplify statements by assuming step one has been completed. That is, when convenient, we can assume that we are starting with a monomial ideal.

In practice it is often the case that the initially regular sequence that we construct is also a regular sequence. We will show instances where the two notions are equivalent. To do so we first examine the initial ideal of $(I,b_0+b_1)$, where $b_0, b_1$ are distinct variables in a polynomial ring $R$ and $I$ is a monomial ideal in $R$. Note that in our examination, we describe a  set of monomials that generate $\ini(I,b_0+b_1)$. Although the set need not be minimal, it is convenient to describe the set in terms of the minimal generators of $I$.

To state the result we introduce a notation for the degree of a variable in a monomial. For a monomial $N$ and a variable $x$, define $d_x(N)= \max\left\{ t \mid x^t \text{ divides } N\right\}$.

\begin{lemma} \label{gens H}
Let $I$ be a monomial ideal in a polynomial ring $R$. Suppose that $b_0, b_1$ are distinct variables of $R$.
Fix a term order with $b_0>b_1$. Then
\begin{eqnarray*}
\ini (I,b_0+b_1)=\left(b_0, b_1^{d_{b_0}(M_j)} \frac{M_j}{b_0^{d_{b_0}(M_j)}} ~\Big|~ M_j \text{ a minimal monomial generator of } I\right).
\end{eqnarray*}
\end{lemma}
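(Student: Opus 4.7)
The plan is to exhibit an explicit Gr\"obner basis for $(I, b_0+b_1)$ whose leading monomials are precisely the generators appearing on the right-hand side. Write each minimal monomial generator of $I$ as $M_j = b_0^{e_j} N_j$, where $e_j = d_{b_0}(M_j)$ and $b_0 \nmid N_j$, so that the claimed generators are $b_0$ and $b_1^{e_j} N_j$. I would show that the set
$$G = \{b_0 + b_1\} \cup \{b_1^{e_j} N_j \mid j\}$$
both generates $(I, b_0 + b_1)$ and is a Gr\"obner basis with respect to the given term order.

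The first step is the ideal equality $(I, b_0+b_1) = (G)$. The identity $b_0^e - (-b_1)^e = (b_0+b_1)(b_0^{e-1} - b_0^{e-2}b_1 + \cdots + (-b_1)^{e-1})$ gives $b_0^{e_j} N_j \equiv (-1)^{e_j} b_1^{e_j} N_j \pmod{b_0+b_1}$ for each $j$. This single identity simultaneously shows each $b_1^{e_j} N_j \in (I, b_0+b_1)$ and each $M_j \in (G)$, hence the ideals coincide.

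The second step is to verify Buchberger's criterion on $G$. The S-polynomial of any two monomials vanishes, so the only S-pairs to examine are those of the form $S(b_0+b_1,\, b_1^{e_j} N_j)$. Because $b_0 \nmid N_j$, the leading monomials $b_0$ and $b_1^{e_j} N_j$ are coprime; equivalently, a direct calculation yields $S(b_0+b_1,\, b_1^{e_j} N_j) = b_1^{e_j+1} N_j$, which is a multiple of $b_1^{e_j} N_j \in G$ and thus reduces to zero. Hence $G$ is a Gr\"obner basis, and $\ini(I, b_0+b_1)$ is generated by the leading terms $b_0$ and $b_1^{e_j} N_j$, as asserted.

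The main obstacle is bookkeeping rather than a genuine difficulty: one must resist the temptation to work with the naive generating set $\{b_0+b_1\} \cup \{M_j\}$, which is \emph{not} a Gr\"obner basis (its S-polynomials produce exactly the new monomials $b_1^{e_j} N_j$ after repeated reduction by $b_0+b_1$). Once the rewriting $b_0^{e_j} N_j \leadsto (-1)^{e_j} b_1^{e_j} N_j$ is performed up front, the remaining verification is immediate from the coprime-leading-term criterion, so the proof is short but hinges on choosing the right generating set at the start.
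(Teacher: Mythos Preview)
Your proof is correct. The paper takes a slightly different route: rather than guessing the final Gr\"obner basis and verifying it, the paper runs Buchberger's algorithm forward from the naive generating set $\{b_0+b_1\}\cup\{M_j\}$, computing $S(M_j,b_0+b_1)=\dfrac{M_j}{b_0}\,b_1$ whenever $b_0\mid M_j$ and then iteratively reducing by $b_0+b_1$ until $b_0$ no longer divides, arriving at $b_1^{e_j}N_j$. Your approach short-circuits this iteration by writing down the answer up front, checking the ideal equality via the factorization of $b_0^e-(-b_1)^e$, and then invoking the coprime-leading-term criterion, which collapses the Buchberger verification to a single line. The paper's approach is more constructive (it shows how the generators are discovered), while yours is cleaner once the answer is in hand; both are standard and entirely valid.
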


\begin{proof}
Let $R=k[x_1, \ldots, x_{r},b_0,b_1]$, $R_1=k[x_1, \ldots, x_r]$, and $R_2=k[b_0,b_1]$. We may write $I=(M_1, \ldots, M_{r_1}, M_{r_1+1}, \ldots M_p)$, where $b_0 \mid M_{j}$ for $1\leq j\leq r_1$ , and $b_0 \nmid M_j$ for $ r_1+1\leq j \leq p$. Observe that $b_0 \in \ini(I,b_0+b_1)$ since $b_0$ is the leading term of $b_0+b_1$.

In order to compute a Gr\"{o}bner basis for $(I,b_0+b_1)$ we must consider the reductions of all possible $S$-resultants. Notice that the $S$-resultant of two monomials is $0$. Thus, it initially suffices to consider all possible $S$-resultants involving $f_1=b_0+b_1$. Let $M$ be a monomial. Then $S(M,f_1)$ is again monomial. In fact,
\begin{eqnarray*}
S(M, f_1)=\frac{\lcm(M, b_0)}{b_0}(b_0+b_1)-\frac{\lcm(M,b_0)}{M}M=\frac{\lcm(M, b_0)}{b_0}b_1.
\end{eqnarray*}

 For $r_1+1\leq j \leq p$,  we have $$S(M_j, f_1)=\frac{\lcm(M_j, b_0)}{b_0}b_1=M_jb_1,$$ since $b_0 \nmid M_j$. In this case, $S(M_j,f_1)$ reduces to $0$ modulo the generators of $I$.

For $1\leq j \leq r_1$, we have $S(M_j, f_1)=\frac{\lcm(M_j, b_0)}{b_0}b_1=\frac{M_jb_1}{b_0}$.  If $b_0$ does not divide $\frac{M_jb_1}{b_0}$, then
$$\frac{M_jb_1}{b_0}=b_1^{d_{b_0}(M_j)} \frac{M_j}{b_0^{d_{b_0}(M_j)}}.$$
If $b_0$ divides $\frac{M_jb_1}{b_0}$ then we reduce by $b_0+b_1$ to get
$$\frac{M_jb_1}{b_0} -\left(\frac{M_jb_1}{b_0^2}\right)(b_0+b_1)=- \frac{M_jb_1^2}{b_0^2}.$$
Iterating the reduction step of the algorithm, we continue to reduce until the result is no longer divisible by $b_0$. That is, until we reach $ b_1^{d_{b_0}(M_j)} \frac{M_j}{b_0^{d_{b_0}(M_j)}}$. The assertion follows.
 \end{proof}

If $a,b$ is an initially regular sequence on $R/I$ and $f$ is an initially regular element on $R/(I,a,b)$, then $a,b,f$ need not be an initially regular sequence on $R/I$ since in general $\ini(I,a,b)\not=\ini(\ini(I,a),b)$ even in the case where only one term order is used. The situation is more clear when $a$ and $b$ are sums of two variables. In this case, Lemma~\ref{gens H} can be applied to obtain the following result.

\begin{lemma}\label{lem.initial}
Let $I$ be a monomial ideal in a polynomial ring $R$. Let $\{x_i,y_i\}_{i=1}^\ell$ be pairs of variables, and $>$ be a term order such that $x_i > y_i$ for all $i$ and $x_i > x_j$ for all $i < j$. Then
$$\ini(I,x_1+y_1, \ldots, x_\ell + y_\ell) = \ini(\ini(\ldots \ini(\ini(I,x_1+y_1),x_2+y_2), \ldots), x_\ell+y_\ell).$$
\end{lemma}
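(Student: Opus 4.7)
The plan is to identify both sides with the explicit monomial ideal
$$J := (x_1,\ldots,x_\ell) + \phi(I),$$
where $\phi\colon R\to R$ is the $k$-algebra endomorphism sending $x_i\mapsto y_i$ for each $i$ and fixing every other variable. I will assume the $2\ell$ variables $x_1,y_1,\ldots,x_\ell,y_\ell$ are distinct, as is the case in the intended applications.

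First I would compute the right-hand side by iterating Lemma~\ref{gens H}. Writing $I_0=I$ and $I_i=\ini(I_{i-1},x_i+y_i)$, Lemma~\ref{gens H} gives $I_i=(x_i)+\phi_i(I_{i-1})$, where $\phi_i$ is the substitution $x_i\mapsto y_i$. The key observation is that $\phi_i(I_{i-1})$ no longer involves $x_i$, so the variable $x_i$ is untouched by any later $\phi_j$ with $j>i$. Hence the composite $\phi_\ell\circ\cdots\circ\phi_1$ agrees with the simultaneous substitution $\phi$ on $I$, and iterating produces $I_\ell=(x_1,\ldots,x_\ell)+\phi(I)=J$.

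Next, to evaluate the left-hand side I would exhibit a Gr\"obner basis of $A:=(I, x_1+y_1,\ldots,x_\ell+y_\ell)$. My candidate is
$$\mathcal{G} := \{x_1+y_1,\ldots,x_\ell+y_\ell\}\cup\{\phi(M)\mid M\text{ a minimal monomial generator of }I\}.$$
That $\mathcal{G}$ generates $A$ follows because, working modulo $(x_1+y_1,\ldots,x_\ell+y_\ell)$, each generator $M$ of $I$ agrees up to sign with $\phi(M)$, so both sets generate the same ideal. The heart of the argument is then Buchberger's criterion. Three families of $S$-polynomials need checking: (i) $S$-pairs of two monomials in $\phi(I)$, which vanish trivially; (ii) $S(\phi(M),x_i+y_i)$, which (using that $\phi(M)$ contains no $x_i$, so $\mathrm{lcm}(\phi(M),x_i)=x_i\,\phi(M)$) simplifies to $-y_i\,\phi(M)\in\phi(I)$, hence reduces to $0$; and (iii) the mixed pair $S(x_i+y_i,x_j+y_j)=x_j y_i - x_i y_j$ for $i\ne j$.

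The only delicate step is (iii), and this is the main obstacle: the two monomials $x_jy_i$ and $x_iy_j$ are incomparable under the hypotheses we are given, so a priori we do not know which is leading. I would handle both cases uniformly by reducing with both linear generators in turn: subtracting $y_i(x_j+y_j)$ yields $-y_iy_j - x_iy_j = -y_j(x_i+y_i)$, which is then killed by $x_i+y_i$; symmetrically if the leading term is $x_iy_j$. Either way the $S$-polynomial reduces to $0$. With Buchberger's criterion verified, $\mathcal{G}$ is a Gr\"obner basis of $A$, and its leading-term ideal is precisely $(x_1,\ldots,x_\ell)+\phi(I)=J$. This matches the right-hand side computed in the first step, concluding the proof.
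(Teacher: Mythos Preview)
Your approach mirrors the paper's: both compute each side explicitly, using Lemma~\ref{gens H} for the iterated side and a Buchberger argument for the one-shot side, and show they coincide with the same explicit monomial ideal. Under your added distinctness hypothesis the argument is correct.

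The gap is that distinctness is \emph{not} always available in the applications, contrary to your parenthetical. Immediately after this lemma the paper remarks that the $y_i$ need not be distinct and that $x_i=y_j$ with $i>j$ is permitted; more to the point, Theorem~\ref{iterate repeated neighbors} invokes Corollary~\ref{regular sums of two} (hence this lemma) with the pairs $(b_0,b_1),(b_1,b_2),\ldots$, so that $y_i=x_{i+1}$ throughout. In that overlapping situation your simultaneous substitution $\phi$ is the wrong map. If, say, $y_1=x_2$, then $\phi(x_1)=y_1=x_2$, so $\phi(M)$ can still involve $x_2$; your claim in case~(ii) that ``$\phi(M)$ contains no $x_i$'' then fails, and moreover $\phi$ no longer agrees with the composite $\phi_\ell\circ\cdots\circ\phi_1$ (the latter sends $x_1\mapsto x_2\mapsto y_2$, whereas $\phi$ sends $x_1\mapsto x_2$).

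The repair is exactly what the paper does: work with the successive composition $\psi:=\phi_\ell\circ\cdots\circ\phi_1$ rather than a simultaneous substitution. The term-order hypotheses force $y_j\neq x_i$ whenever $i<j$ (since $x_i>x_j>y_j$), so once $x_i$ has been eliminated it cannot be reintroduced by a later $\phi_j$, and hence $\psi(M)$ is genuinely free of every $x_i$. With $\psi$ in place of $\phi$, your candidate Gr\"obner basis $\{x_i+y_i\}\cup\{\psi(M)\}$ and your Buchberger verification go through in full generality, and the two proofs become essentially identical.
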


\begin{proof} By iterated use of Lemma~\ref{gens H}, the right hand side can be generated by $x_1, \ldots, x_\ell$ and the monomials obtained from the monomial generators of $I$ by successively replacing $x_1$ by $y_1$, then $x_2$ by $y_2$, \ldots, and eventually $x_\ell$ by $y_\ell$. Notice that once $x_i$ is replaced by $y_i$ in a monomial generator of $I$, $x_i$ will not reappear in the generating set through subsequent replacements since by the term order $x_i \neq x_j$ and $y_j \neq x_i$ for any $i < j$.

On the other hand, the left hand side is generated by the leading terms of a Gr\"obner basis of $(I, x_1+y_1, \ldots, x_\ell+y_\ell)$. Observe that the $S$-resultant of $x_i+y_i$ and $x_j+y_j$ reduces to $0$ modulo $\{x_1+y_1, \ldots, x_\ell+y_\ell\}$. As in Lemma~\ref{gens H}, the reduction of an $S$-resultant of a monomial with $x_i+y_i$ is formed by successively replacing $x_i$ by $y_i$ until $x_i$ no longer divides the resulting monomial. If this resulting monomial is divisible by $x_j$ for some $j$, the reduction process continues, eventually yielding a monomial where $x_j$ has been replaced by $y_j$ for all $j$. Hence, the left hand side can also be generated by the set consisting of $x_1, \ldots, x_\ell$ and the monomials obtained from monomial generators of $I$ by successively replacing $x_i$ by $y_i$, for $i = 1, \ldots, \ell$.
\end{proof}

It is interesting to note that not all the variables used in Lemma~\ref{lem.initial} need to be distinct. The conditions on the term order imply that $x_1, \ldots , x_{\ell}$ are distinct and that $x_i\not=y_j$ if $i\leq j$. However, $y_1, \ldots , y_{\ell}$ need not be distinct and $x_i=y_j$ for $i>j$ is possible. For example, the sets $\{x,y\}, \{z,y\}, \{y,w\}$ meet the conditions of Lemma~\ref{lem.initial} if $x>z>y>w$.

Our next result establishes an instance where the notion of initially regular is equivalent to being  regular.

\begin{theorem}\label{regular equiv ini regular}
Let $I$ be a monomial ideal in a polynomial ring $R=k[x_1, \ldots, x_r, b_0,b_1]$. Fix a term order with $b_0 > b_1$. Let $f \in R'=k[x_1,\ldots, x_r,b_1]$ be a polynomial. Then $f$ is regular on $R/(I,b_0+b_1)$ if and only if $f$ is initially regular on $R/(I, b_0+b_1)$.
\end{theorem}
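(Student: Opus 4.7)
The plan is to realize both $R/(I,b_0+b_1)$ and $R/\ini(I,b_0+b_1)$ as the same quotient of $R'=k[x_1,\ldots,x_r,b_1]$, and then to observe that under this identification the element $f$ corresponds to itself on both sides. Once this is achieved, the equivalence of ``regular'' and ``initially regular'' is automatic: regularity of an element is preserved under ring isomorphism, so $f$ is a non-zero-divisor on one quotient precisely when it is on the other, which by Definition~\ref{ini reg} (applied to the length-one sequence $(f)$) is exactly what is being asserted.

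For the first description, Lemma~\ref{gens H} gives $\ini(I,b_0+b_1)=(b_0)+J$, where $J$ is the monomial ideal of $R'$ generated by the monomials $b_1^{d_{b_0}(M_j)}\,M_j/b_0^{d_{b_0}(M_j)}$ as $M_j$ runs over the minimal monomial generators of $I$. Since $R/(b_0)$ is naturally identified with $R'$, this yields an isomorphism $R/\ini(I,b_0+b_1)\cong R'/J$ under which the class of any polynomial in $R'$ maps to its class modulo $J$. For the second description, define the $k$-algebra surjection $\pi\colon R\to R'$ by $\pi(b_0)=-b_1$ and $\pi=\mathrm{id}$ on the remaining variables; then $\ker\pi=(b_0+b_1)$. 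For each minimal generator $M_j$ of $I$, with $d_j:=d_{b_0}(M_j)$, one computes $\pi(M_j)=(-1)^{d_j}\,b_1^{d_j}\cdot M_j/b_0^{d_j}$, which generates the same principal ideal in $R'$ as the corresponding generator of $J$ described by Lemma~\ref{gens H}. Hence $\pi(I)=J$, and $\pi$ descends to an isomorphism $R/(I,b_0+b_1)\cong R'/J$.

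Composing the two isomorphisms produces a ring isomorphism $\varphi\colon R/(I,b_0+b_1)\cong R/\ini(I,b_0+b_1)$. Because $f\in R'$, the map $\pi$ fixes $f$, and the inclusion $R'\hookrightarrow R$ is compatible with the projection $R\to R/(b_0)$; tracing definitions shows that $\varphi$ sends the class of $f$ in $R/(I,b_0+b_1)$ to the class of $f$ in $R/\ini(I,b_0+b_1)$. Consequently, $f$ is a non-zero-divisor on $R/(I,b_0+b_1)$ if and only if it is a non-zero-divisor on $R/\ini(I,b_0+b_1)$, which is precisely the claim.

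The main technical point to keep straight is the sign appearing in $\pi(M_j)$: the substitution $b_0\mapsto -b_1$ introduces a factor of $(-1)^{d_j}$, but this is a unit in $R'$, so $\pi(I)$ and $J$ coincide on the nose as ideals, not merely up to a change of generators. The hypothesis $f\in R'$ is essential; if $f$ involved $b_0$, then $\pi(f)\neq f$ and the isomorphism $\varphi$ would no longer identify the two copies of $f$, so the argument would break down. Apart from this observation, the proof is essentially formal once Lemma~\ref{gens H} is in hand.
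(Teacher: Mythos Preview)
Your proof is correct and follows essentially the same strategy as the paper: both arguments construct the substitution map $\pi\colon R\to R'$ sending $b_0\mapsto -b_1$, use Lemma~\ref{gens H} to identify $R/\ini(I,b_0+b_1)$ with $R'/J$, and then show $R/(I,b_0+b_1)\cong R'/J$ via $\pi$, so that $f\in R'$ corresponds to itself under the composite isomorphism. Your version is slightly more streamlined in that you compute $\pi(I)=J$ directly from the generators (noting that the sign $(-1)^{d_j}$ is a unit), whereas the paper verifies $\ker\overline{\phi}=(I,b_0+b_1)$ by a monomial-by-monomial argument; but these amount to the same thing.
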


\begin{proof}
Since $I$ is a monomial ideal, write $I=(M_1, \ldots, M_{\ell}, M_{\ell +1}, \ldots M_p)$, where $M_j$ is divisible by $b_0$ or $b_1$ if and only if $1\leq j \leq \ell$. For simplicity of notation, for a monomial $M$, set
$$\widehat{M} = b_1^{d_{b_0}(M)} \dfrac{M}{b_0^{d_{b_0}(M)}}.$$
By Lemma~\ref{gens H} we have
$$\ini(I, b_0+b_1)=\left(b_0, \widehat{M_1}, \ldots , \widehat{M_{\ell}}, M_{\ell +1}, \ldots , M_p \right).$$
Define $\phi: R \rightarrow R'$ to be the ring homomorphism that sends $b_0$ to $-b_1$ and identifies all other variables in $R$. It is easy to see that $\phi$ is onto and its kernel is $(b_0+b_1)$. Set
$$I'=\langle \ini(I,b_0+b_1)\setminus\{b_0\}\rangle = \left(\widehat{M_1}, \ldots , \widehat{M_{\ell}}, M_{\ell +1}, \ldots , M_p \right).$$
Define $\overline{\phi}:R \rightarrow R'/I'$ by $\overline{\phi}(a)=\phi (a) +I'$ for $a\in R$. Notice that $\overline{\phi}$ is an onto homomorphism, and since $\phi(I) \subseteq I'$, $(I, b_0+b_1) \subseteq {\mbox{\rm ker}}(\overline{\phi})$.
To see that this is an equality, consider an arbitrary monomial $M$ of $R$ such that $\phi(M) \in I'$. If $M$ is not divisible by $b_0$ or $b_1$, then $\phi(M) = M$ and $M \in \langle M_{\ell +1}, \ldots, M_p \rangle \subseteq I$. If $b_0$ or $b_1$ divides $M$, then $b_1$ divides $\phi(M)$ and thus $\phi(M)$ is divisible by $\widehat{M_j}$ for some $1\leq j \leq \ell$. By the definition of $\phi$, this implies that there is a monomial $N=b_0^{t_0}b_1^{t_1}{\frac{\widehat{M_j}}{b_1^{t}}}$ that divides $M$ for some $t_0+t_1= t=d_{b_1}(\widehat{M_j})$.
Set $M_j'= {\frac{\widehat{M_j}}{b_1^{t}}}$.  Notice that $M_j=b_0^{s_0}b_1^{s_1}M_j'$ for some $s_0,s_1 \geq 0$ with $s_0+s_1=t$.

Next we claim that $N \in (I,b_0+b_1)$. If $t_0=s_0$, then $t_1=s_1$ and $N=M_j \in I \subseteq (I,b_0+b_1)$. If $s_0 \geq 1$, then ${\frac{M_jb_1}{b_0}}=(b_0+b_1){\frac{M_j}{b_0}} - M_j  \in (I,b_0+b_1)$. Iterating the process shows that $N \in (I, b_0+b_1)$ for all $t_0\leq s_0$. Similarly, if $s_1 \geq 1$, then ${\frac{M_jb_0}{b_1}}=(b_0+b_1){\frac{M_j}{b_1}} - M_j  \in (I,b_0+b_1)$. Again, iterating the process shows that $N \in (I, b_0+b_1)$ for all $t_1\leq s_1$. Since $t_0+t_1=s_0+s_1$, this shows $N\in (I,b_0+b_1)$ for all such $N$.
Thus, ker$(\overline{\phi})=(I,b_0+b_1)$ and so
$$R/(I,b_0+b_1) \cong R'/I'.$$

Notice that $R/\ini(I, b_0+b_1) \cong R/(I', b_0) \cong R'/I' \cong R/(I, b_0+b_1)$ and therefore, since $\phi(f)=f$, then $f$ is regular on $R/(I,b_0+b_1)$ if and only if $f$ is regular on $R/\ini(I, b_0+b_1)$.
\end{proof}

In the setting of Theorem~\ref{regular equiv ini regular}, suppose $f \in R''=k[x_1, \ldots, x_r]$. Then it follows directly from the proof that the roles of $b_0$ and $b_1$ can be reversed.

\begin{corollary}\label{regular sums of two}
Let $I$ be a monomial ideal in a polynomial ring $R$.
Let $C=\{c_1, \ldots, c_r\}$ be a set of distinct variables, and let $B=\{x_1, \ldots, x_{q-1}, y_1, \ldots, ,y_{q-1}\}$ be a collection of variables disjoint from $C$ with a possible exception of $c_1=y_{q-1}$. Fix a term order $>$ such that $x_i > y_i$ for all $i$, $x_i > x_j$ for all $i < j$, and $c_1>c_i$ for all $2\leq i \leq r$.
Set $f_i=x_i+y_i$
 for all $1\leq i \leq q-1$ and let $f_{q}=c_1+\ldots +c_r$.
 Then,
 $f_1, \ldots, f_{q}$ is a regular sequence on $R/I$ if and only if $f_1, \ldots, f_q$ is an initially regular sequence on $R/I$.
 \end{corollary}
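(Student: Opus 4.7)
The plan is to reduce the equivalence to an iterated application of Theorem~\ref{regular equiv ini regular}. For each $1 \le i \le q$, set $J_i = (I, f_1, \ldots, f_{i-1})$. The sequence $f_1, \ldots, f_q$ is regular on $R/I$ precisely when $f_i$ is a nonzerodivisor on $R/J_i$ for every $i$, and by Lemma~\ref{lem.initial} applied to the first $q-1$ linear forms, one has $I_i = \ini(I_{i-1}, f_{i-1}) = \ini(I, f_1, \ldots, f_{i-1})$ for all $i \le q$, so the sequence is initially regular on $R/I$ precisely when $f_i$ is a nonzerodivisor on $R/I_i$ for every $i$. It therefore suffices to produce, for each $i$, a ring isomorphism $\psi_i : R/J_i \to R/I_i$ sending the class of $f_i$ to the class of $f_i$.

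To build $\psi_i$ I would iterate the map used in the proof of Theorem~\ref{regular equiv ini regular}. Repeated use of Lemma~\ref{gens H} yields $I_i = (x_1, \ldots, x_{i-1}) + \widetilde I_i$, where $\widetilde I_i$ is the monomial ideal obtained from $I$ by successively replacing $x_j$ by $y_j$, for $j = 1, \ldots, i-1$, in each minimal monomial generator; the natural quotient then gives $R/I_i \cong R^{(i-1)}/\widetilde I_i$, where $R^{(i-1)} := R/(x_1, \ldots, x_{i-1})$ is itself a polynomial ring in the remaining variables. On the other hand, the substitution map $\phi : R \to R^{(i-1)}$ obtained by composing $x_j \mapsto -y_j$ for $j = 1, \ldots, i-1$ is a surjective ring homomorphism with kernel $(f_1, \ldots, f_{i-1})$ and carries $I$ into $\widetilde I_i$ (signs being absorbed in the resulting monomial ideal). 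Iterating the argument behind Theorem~\ref{regular equiv ini regular} then produces an induced isomorphism $R/J_i \cong R^{(i-1)}/\widetilde I_i$; composing with the identification $R^{(i-1)}/\widetilde I_i \cong R/I_i$ above defines $\psi_i$.

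What remains is to verify that $\psi_i$ preserves $f_i$, which amounts to showing that none of $x_1, \ldots, x_{i-1}$ lies in the support of $f_i$. For $1 \le i \le q-1$ with $f_i = x_i + y_i$, the term order hypotheses give $x_j > x_i > y_i$ for $j < i$, forcing $x_j \notin \{x_i, y_i\}$. For $i = q$ with $f_q = c_1 + \cdots + c_r$, the hypothesis $B \cap C \subseteq \{c_1 = y_{q-1}\}$ together with the observation following Lemma~\ref{lem.initial} that a coincidence $x_j = y_k$ can occur only when $j > k$ rules out $x_j = c_\ell$ for $j \le q-1$ and any $\ell$. With this preservation established, $f_i$ is a nonzerodivisor on $R/J_i$ if and only if $f_i$ is a nonzerodivisor on $R/I_i$, which yields the claimed equivalence. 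I expect the main obstacle to be the careful bookkeeping of labeled variables, since the $y_j$'s need not be distinct and some may coincide with later $x_k$'s, and $c_1$ may equal $y_{q-1}$; the term-order constraints nonetheless guarantee that each iterated substitution is well-posed and compatible with the Gr\"obner basis computation underlying Lemma~\ref{gens H}.
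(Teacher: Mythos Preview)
Your proposal is correct and follows essentially the same route as the paper, which simply states that the result follows by induction using Theorem~\ref{regular equiv ini regular} and Lemma~\ref{lem.initial}. You have unpacked that induction explicitly: Lemma~\ref{lem.initial} identifies $I_i$ with $\ini(I,f_1,\ldots,f_{i-1})$, and the isomorphism $R/(I,f_1,\ldots,f_{i-1})\cong R^{(i-1)}/\widetilde I_i\cong R/I_i$ is exactly the one underlying the proof of Theorem~\ref{regular equiv ini regular}, iterated; your verification that $x_1,\ldots,x_{i-1}$ avoid the support of $f_i$ is the point the paper leaves implicit.
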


\begin{proof}
The result follows by induction, using Theorem~\ref{regular equiv ini regular} and Lemma~\ref{lem.initial}.
\end{proof}

Note that the condition that the ideal $I$ is monomial in Corollary~\ref{regular sums of two} is necessary, so for a general ideal $I$, the corollary yields a sequence that is regular on $R/\ini(I)$. We shall apply Corollary~\ref{regular sums of two} later on, in Section~\ref{extensions}, to construct examples of initially regular sequences which are also regular sequences.

\section{Constructing initially regular sequences} \label{sec.construction}

This section is devoted to the task of deriving an algorithm to construct initially regular sequences. Recall that Proposition~\ref{ini reg bound} gives a lower bound for the depth of $R/I$ provided that initially regular sequences on $R/I$ can be found.

Since initially regular sequences require that an element is regular on an initial ideal at each step, it is helpful to understand the structure of the Gr\"{o}bner basis at each step in the construction. For background information on Gr\"obner bases or Buchberger's algorithm, see \cite{AL}. For simplicity and convenience of notation, in the remainder of this section, unless otherwise specified, we shall assume the following set-up:

\begin{setup}\label{setup1}
Let $R = k[x_1, \ldots, x_r, b_0, \ldots, b_t]$ be a polynomial ring over a field and let $I$ be a monomial ideal in $R$. Suppose that $R$ has a fixed term order, and set $R_1 = k[x_1, \ldots, x_r]$ and $R_2 = k[b_0, \ldots, b_t]$.

\end{setup}

The following notion allows us to focus on ideals and Gr\"obner bases of a particular form, which is an essential part of our construction of initially regular sequences.

\begin{definition}
A polynomial $f \in R$ is called {\it{$(R_1, R_2)$-factorable}} if $f=Mg \in R$, where $M \in R_1$ is a monomial and $g\in R_2$ is a polynomial. An ideal $I$ that admits a minimal set of generators $\{f_1, \ldots, f_p\}$ in which $f_i$ is $(R_1, R_2)$-factorable for all $i$ is called an {\it{$(R_1, R_2)$-factorable} ideal}. A Gr\"obner basis whose elements are $(R_1, R_2)$-factorable is called an {\it{$(R_1, R_2)$-factorable Gr\"{o}bner basis}}.
\end{definition}

Note that all monomial ideals are $(R_1, R_2)$-factorable, where $1$ is considered a monomial in $R_1$ when necessary.
The next two lemmas show that the two key steps of Buchberger's algorithm, forming $S$-resultants and the reduction process, preserve $(R_1, R_2)$-factorability. Recall that for a polynomial $g$ in $R$ under a fixed term order, $\ini(g)$ represents the leading term of $g$.

\begin{lemma}\label{S-resultant}
Let $f,g\in R$ be $(R_1, R_2)$-factorable polynomials.Then $S(f,g)$ is also $(R_1,R_2)$-factorable.
\end{lemma}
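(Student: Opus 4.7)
The plan is to exploit the fact that $R_1$ and $R_2$ involve disjoint sets of variables, so leading terms of $(R_1,R_2)$-factorable polynomials split cleanly, and the $S$-resultant computation preserves this split.

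First I would write $f = M_1 f_2$ and $g = N_1 g_2$, where $M_1, N_1 \in R_1$ are monomials and $f_2, g_2 \in R_2$ are polynomials. Since the variables of $R_1$ and $R_2$ are disjoint, the leading term factors as $\ini(f) = M_1 \cdot \ini(f_2)$ (with $\ini(f_2)$ a monomial in the $b$-variables), and similarly $\ini(g) = N_1 \cdot \ini(g_2)$. Consequently the lcm splits:
$$\operatorname{lcm}(\ini(f), \ini(g)) = \operatorname{lcm}(M_1, N_1) \cdot \operatorname{lcm}(\ini(f_2), \ini(g_2)).$$

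Next I would substitute into the definition of the $S$-resultant. Setting $u = \operatorname{lcm}(\ini(f_2), \ini(g_2))/\ini(f_2)$ and $v = \operatorname{lcm}(\ini(f_2), \ini(g_2))/\ini(g_2)$ (monomials in $R_2$), the cofactor in front of $f$ becomes $(\operatorname{lcm}(M_1, N_1)/M_1) \cdot u$, and the cofactor in front of $g$ becomes $(\operatorname{lcm}(M_1, N_1)/N_1) \cdot v$. Multiplying these back against $f = M_1 f_2$ and $g = N_1 g_2$, the $R_1$-parts collapse in each summand to $\operatorname{lcm}(M_1, N_1)$, giving
$$S(f,g) = \operatorname{lcm}(M_1, N_1) \cdot (u f_2 - v g_2).$$
The prefactor $\operatorname{lcm}(M_1, N_1)$ is a monomial in $R_1$, and $uf_2 - vg_2$ lies entirely in $R_2$, so $S(f,g)$ is $(R_1, R_2)$-factorable, as desired.

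There is no real obstacle here; the only point that warrants a sentence of care is the claim that the leading term of a product $M_1 f_2$ equals $M_1 \cdot \ini(f_2)$, which relies crucially on $R_1$ and $R_2$ sharing no variables so that no terms of $M_1 f_2$ can cancel or be reshuffled by the term order. Once that is noted, the identification of $S(f,g)$ with $\operatorname{lcm}(M_1,N_1)(uf_2 - vg_2)$ is a direct calculation, and $(R_1, R_2)$-factorability follows immediately (with the convention that if $uf_2 - vg_2 = 0$, we interpret the zero polynomial as trivially factorable).
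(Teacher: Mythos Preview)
Your proof is correct and is essentially the same computation as the paper's: both factor the leading terms via the disjointness of the $R_1$- and $R_2$-variables, split the $\lcm$ accordingly, and arrive at $S(f,g)=\operatorname{lcm}(M_1,N_1)\,S(f_2,g_2)$ (your $uf_2-vg_2$ is precisely $S(f_2,g_2)$). The only difference is cosmetic---you spell out the intermediate cofactors $u,v$ and add a remark on the zero case, whereas the paper writes the identity in one line.
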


\begin{proof}
By assumption, there exist monomials $M,N \in R_1$ and polynomials $f', g' \in R_2$ with $f=Mf'$ and $g=Ng'$. Write $f'=f_1+\tilde{f}$ and $g'=g_1+\tilde{g}$, where $f_1$ and $g_1$ are the leading terms of $f'$ and $g'$ respectively under the fixed term order. By the definition of an $S$-resultant, and using the fact that $M$ and $N$ are monomials in $R_1$, we have
\begin{eqnarray*}
S(Mf',Ng') & = & \frac{\lcm (Mf_1,Ng_1)}{Mf_1}Mf' - \frac{\lcm(Mf_1,Ng_1)}{Ng_1}Ng' \\
&=& \frac{\lcm(M,N)\lcm(f_1,g_1)}{f_1}f'-\frac{\lcm(M,N)\lcm(f_1,g_1)}{g_1}g'\\
&=& \lcm(M,N)S(f',g').
\end{eqnarray*}
\end{proof}

\begin{lemma}\label{reduction step}
Let $f,g\in R$ be $(R_1, R_2)$-factorable polynomials. When $f$ is reduced modulo $g$ in Buchberger's algorithm, then the remainder will also be $(R_1, R_2)$-factorable. Moreover, the $R_1$-monomial term in the $(R_1, R_2)$-factorization of $f$ is the same as the $R_1$-monomial term of the remainder.
\end{lemma}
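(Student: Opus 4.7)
The plan is to track what happens in a single reduction step and then iterate. Write $f = Mf'$ and $g = Ng'$ with $M, N \in R_1$ monomials and $f', g' \in R_2$. Since the reduction process is by nature iterative, it suffices to show that a single elementary reduction step preserves $(R_1, R_2)$-factorability and the $R_1$-monomial factor $M$; then induction on the number of reduction steps yields the full statement, since the zero polynomial is trivially $(R_1, R_2)$-factorable.

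For the single step, suppose a term $cT$ of $f$ (with $c \in k$ a coefficient and $T$ a monomial) is divisible by $\ini(g)$. Because $N \in R_1$ is a monomial in the $x$'s and $g' \in R_2$ is a polynomial in the $b$'s, we have $\ini(g) = N \cdot \ini(g')$. Moreover, every term of $f = Mf'$ has the form $MT'$ for some term $T'$ of $f'$; so write $T = MT'$ with $T' \in R_2$. The key observation is that the variables of $R_1$ and $R_2$ are disjoint: since $N \in R_1$ and $\ini(g') \in R_2$, the divisibility $N \cdot \ini(g') \mid M \cdot T'$ forces $N \mid M$ and $\ini(g') \mid T'$ separately.

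With this in hand, the reduction step subtracts $\dfrac{cT}{\ini(g)} g$ from $f$. A direct computation gives
\begin{eqnarray*}
f - \frac{cT}{\ini(g)}\, g \;=\; Mf' - \frac{c\, MT'}{N\,\ini(g')} \cdot N g' \;=\; M\!\left(f' - c\, \frac{T'}{\ini(g')}\, g'\right),
\end{eqnarray*}
where the cancellation of $N$ is legitimate precisely because $N \mid M$, and the quotient $T'/\ini(g')$ is a genuine monomial in $R_2$ because $\ini(g') \mid T'$. Thus the remainder factors as $M$ times a polynomial in $R_2$, proving both assertions of the lemma for one reduction step.

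Finally, iterating: after the single step the remainder has the form $Mh$ with $h \in R_2$, so the same argument applies to each subsequent reduction against any $(R_1, R_2)$-factorable element (the $R_1$-monomial $M$ persists at every step), and the conclusion follows. There is no real obstacle here; the only thing that could be worried about is the hidden use of disjointness of the variable sets of $R_1$ and $R_2$, and then being careful that $N \mid M$ so that the factor $N$ cleanly cancels and leaves $M$ untouched.
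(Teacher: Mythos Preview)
Your proof is correct and follows essentially the same approach as the paper's: both reduce to analyzing a single elementary reduction step, split the divisibility $\ini(g) \mid cT$ into its $R_1$- and $R_2$-parts (you via disjointness of variable sets, the paper via writing the multiplier as $\alpha = \alpha_1\alpha_2$ with $\alpha_1 \in R_1$, $\alpha_2 \in R_2$), and then compute directly that $f - \frac{cT}{\ini(g)}g = M\cdot(\text{something in } R_2)$. Your explicit emphasis on the disjointness of the variable sets of $R_1$ and $R_2$ is, if anything, a bit clearer than the paper's implicit use of it.
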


\begin{proof} Suppose that $f=Mf', g=Ng'$, where  $M, N \in R_1$ are monomials, and $f',g' \in R_2$ are polynomials.  Write $f = \sum_{i=1}^{t_1}f_i=M\sum_{i=1}^{t_1}f_i'$ and $g=\sum_{j=1}^{t_2}g_j=N\sum_{j=1}^{t_2}g_j'$, where the $f_i,g_j$ are the monomial terms of $f$ and $g$, respectively.

Observe that $f$ can be reduced modulo $g$ in the Buchberger's algorithm  if the leading term $g_1 = \ini(g)$ divides a monomial term $f_i$. That is, $f_i=\alpha g_1$ for some monomial $\alpha \in R$. Write $f'=f_i'+\tilde{f}$ and $g'=g_1'+\tilde{g}$ for some polynomials $\tilde{f}, \tilde{g}\in R_2$.

Let $h=f-\alpha g$.  Write $\alpha = \alpha_1 \alpha_2$, where $\alpha_1 \in R_1$ and $\alpha_2 \in R_2$. Since $Mf_i'=\alpha Ng_1'$, it follows that  $\alpha_1 N = M$, and $\alpha_2 g_1'=f_i'$. Therefore,
\begin{eqnarray*}
h & = & f-\alpha g = Mf'- \alpha N g' \\
 & = & Mf_i'+ M \tilde{f} - \alpha N g_1' - \alpha N  \tilde{g} \\
 & = & M \tilde{f} - \alpha_1N \alpha_2 \tilde{g} = M(\tilde{f} - \alpha_2 \tilde{g}).
\end{eqnarray*}
The conclusion now follows since the remainder of $f$ modulo $g$ is obtained by repeating this process until no monomial term of $f$ is divisible by $\ini(g)$.
\end{proof}

Using Lemmas~\ref{S-resultant} and \ref{reduction step}, we show that Buchberger's algorithm preserves $(R_1,R_2)$-factorability.

\begin{proposition}\label{GrobnerArgument}
Let $I=(f_1, \ldots , f_p)$ be an $(R_1, R_2)$-factorable ideal in $R$ such that for each $i = 1, \ldots, p$, $f_i=M_ig_i$, where $M_i \in R_1, g_i \in R_2$, and $M_i$ are monomials. Then, there exists an  $(R_1, R_2)$-factorable Gr\"{o}bner basis of $I$ in which every element is of the form $f=Mg$, where $M \in R_1, g \in R_2$, and $M=\lcm(M_{i_{1}}, \ldots, M_{i_{\ell}})$, for some $1 \leq i_1, \ldots, i_{\ell} \leq p$. Furthermore, the unique reduced Gr\"{o}bner basis of $I$ is also $(R_1, R_2)$-factorable and consists of elements of this form.
\end{proposition}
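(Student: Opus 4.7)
The plan is to run Buchberger's algorithm on the generating set $\{f_1, \ldots, f_p\}$ and to maintain, as invariants throughout the computation, that every polynomial in the current partial basis is $(R_1, R_2)$-factorable and that its $R_1$-monomial factor equals $\lcm(M_{i_1}, \ldots, M_{i_\ell})$ for some subset $\{i_1, \ldots, i_\ell\} \subseteq \{1, \ldots, p\}$. The base case is immediate: each generator $f_i = M_i g_i$ satisfies the invariant with the singleton subset $\{i\}$, since $M_i = \lcm(M_i)$.

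For the inductive step, suppose the invariant holds at some stage. A new element of the partial basis arises as the remainder, under reduction modulo the current basis, of an $S$-resultant $S(F, G)$ where $F = Mf'$ and $G = Ng'$ lie in the current basis and $M, N$ are each an lcm of some subset of $\{M_1, \ldots, M_p\}$. By Lemma~\ref{S-resultant} we have $S(F, G) = \lcm(M, N)\, S(f', g')$, which is $(R_1, R_2)$-factorable with $R_1$-monomial factor $\lcm(M, N)$; this is still an lcm of a subset of $\{M_1, \ldots, M_p\}$, since the lcm of two such lcms is again an lcm of the union of the relevant index subsets. By Lemma~\ref{reduction step}, the reduction of $S(F, G)$ modulo the current basis preserves both $(R_1, R_2)$-factorability and, crucially, the exact $R_1$-monomial factor. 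Hence any nonzero remainder has the required form, and the invariant propagates. Since $R$ is Noetherian, Buchberger's algorithm terminates, yielding an $(R_1, R_2)$-factorable Gr\"obner basis of the asserted form.

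For the statement about the reduced Gr\"obner basis, one obtains it from the Gr\"obner basis produced above by (a) normalizing leading coefficients, (b) discarding any element whose leading term is divisible by the leading term of another, and (c) reducing each remaining element modulo the others. Step (a) scales by a unit and does not disturb $(R_1, R_2)$-factorability or the $R_1$-monomial factor; step (b) only removes elements; and step (c) is a further application of Buchberger reduction, to which Lemma~\ref{reduction step} applies verbatim. Thus the reduced Gr\"obner basis is also $(R_1, R_2)$-factorable with $R_1$-monomial factors of the prescribed lcm form.

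The main obstacle is really just careful bookkeeping: verifying that every place where "reduction" occurs in both the algorithm and in the post-processing step to obtain the reduced basis is covered by Lemma~\ref{reduction step}, and that scalar normalization and removal of redundant elements cannot destroy the invariants. Once the two preceding lemmas are in hand, the proposition follows essentially by induction on the steps of the algorithm.
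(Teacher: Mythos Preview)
Your proof is correct and follows essentially the same approach as the paper: run Buchberger's algorithm and use Lemmas~\ref{S-resultant} and~\ref{reduction step} to verify that $(R_1,R_2)$-factorability and the lcm form of the $R_1$-monomial factor are preserved at each step, then observe that passing to the reduced Gr\"obner basis involves only further reductions (and trivial operations) covered by Lemma~\ref{reduction step}. Your write-up is slightly more explicit about the invariant framework, the ``lcm of lcms'' bookkeeping, and termination, but the argument is the same.
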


\begin{proof}
We follow Buchberger's algorithm to produce a Gr\"{o}bner basis for $I$. Set $G_1=\{f_1, \ldots, f_p\}$. For $i \not= j$ form the S-resultant $S=S(f_i,f_j)$. By Lemma~\ref{S-resultant}, $S$ has the desired form with $S=\lcm(M_i, M_j)S(g_i,g_j)$. If $\ini (f_k)$ divides $\ini(S)$ for any $k$, reduce $S$ modulo $f_k$. Note that by Lemma~\ref{reduction step} the reduction has the desired form and the monomial term of the reduction remains $\lcm(M_i, M_j)$. Repeat this process until $S = \sum_{k=1}^p \alpha_k f_k + f_{p+1}$, where $\ini(f_k)$ does not divide $\ini(f_{p+1})$ for all $k$. That is, $f_{p+1}$ is the remainder when $S$ is reduced modulo $G_1$.   If $f_{p+1} \not= 0$, add it to $G_1$.  Thus, the new set $G_1$ again consists entirely of $(R_1,R_2)$-factorable elements.
Repeating this process produces a Gr\"{o}bner basis $G_1=\left\{ f_1, \ldots, f_p,f_{p+1}, \ldots , f_n\right\}$, where every element has the desired form.

To produce the (unique) reduced Gr\"{o}bner basis, the elements of $G_1$ need to be further reduced so that for each $1\leq i \leq n$, $\ini(f_i)$ does not divide any monomial term of $f_j$ for $i\not= j$. Again by Lemma~\ref{reduction step}, passing to the reduced Gr\"obner basis preserves $(R_1,R_2)$-factorability and the form of the $R_1$-monomial terms.
\end{proof}

A closer examination of the proof in Proposition~\ref{GrobnerArgument} shows that if $I$ is an $(R_1, R_2)$-factorable  ideal, the maximum degree of a variable $x_i$ that divides one of the generators of $I$ will not increase when passing to an $(R_1, R_2)$-factorable  Gr\"{o}bner basis. In particular, if the monomial terms $M_i \in R_1$ associated to the original generating set of $I$ are squarefree, then so are the $R_1$-monomial terms of the Gr\"{o}bner basis. Recall that for a monomial ideal $I = (f_1, \ldots, f_p)$ we  set $d_x(I) = \max\{ d_x(f_i) \mid1 \leq i \leq p\}$. Notice that this is well defined as the set of minimal monomial generators of $I$ is unique.

\begin{corollary}\label{degrees}
Let $I=(f_1, \ldots , f_p)$ be an $(R_1, R_2)$-factorable  ideal with $f_i=M_ig_i$. Then $\max \left\{ d_x(M_i)\right\} \geq d_{x}(\ini(I))$ for every variable $x$ in $R_1$.
\end{corollary}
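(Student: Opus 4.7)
The plan is to reduce the statement to an easy observation about Gröbner bases by invoking Proposition~\ref{GrobnerArgument}. Since $I$ is an $(R_1,R_2)$-factorable ideal with the given generating set, that proposition supplies a Gröbner basis $\mathcal{G} = \{h_1, \ldots, h_n\}$ of $I$ in which each element has the form $h_k = M_{k}' g_{k}'$ with $g_{k}' \in R_2$ and $M_{k}' \in R_1$ a monomial of the form $M_{k}' = \lcm(M_{i_1}, \ldots, M_{i_\ell})$ for some indices $1 \leq i_1, \ldots, i_\ell \leq p$.

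Next I would fix a variable $x \in R_1$ and compute $d_x(\ini(h_k))$ for each $k$. Since $g_{k}' \in R_2$ and the variables of $R_1$ and $R_2$ are disjoint, $\ini(g_{k}')$ involves only the $b_j$'s. Therefore $\ini(h_k) = M_{k}' \cdot \ini(g_{k}')$ and
\[
d_x(\ini(h_k)) \;=\; d_x(M_{k}') \;=\; d_x\bigl(\lcm(M_{i_1}, \ldots, M_{i_\ell})\bigr) \;=\; \max_{1 \leq j \leq \ell} d_x(M_{i_j}) \;\leq\; \max_{1 \leq i \leq p} d_x(M_i).
\]

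Finally, since $\ini(I)$ is generated by $\ini(h_1), \ldots, \ini(h_n)$, every minimal monomial generator of $\ini(I)$ divides one of these leading terms, so its $x$-degree is bounded above by $\max_k d_x(\ini(h_k)) \leq \max_i d_x(M_i)$. Taking the maximum over all minimal monomial generators of $\ini(I)$ yields $d_x(\ini(I)) \leq \max_i d_x(M_i)$, which is the desired inequality.

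There is no real obstacle here beyond correctly invoking Proposition~\ref{GrobnerArgument}; the only subtlety to keep track of is that $x$ lives in $R_1$ while the $g_k'$ live in $R_2$, so passing from $h_k$ to $\ini(h_k)$ leaves the $x$-degree unchanged and one reads it off the $R_1$-factor, which by construction is an lcm of some of the original $M_i$.
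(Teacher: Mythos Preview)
Your proof is correct and follows essentially the same approach as the paper: invoke Proposition~\ref{GrobnerArgument} to obtain an $(R_1,R_2)$-factorable Gr\"obner basis whose $R_1$-factors are lcms of the $M_i$, then read off the bound on $d_x$ from the elementary identity $d_x(\lcm(M_{i_1},\ldots,M_{i_\ell}))=\max_j d_x(M_{i_j})$. Your write-up is arguably cleaner in that you explicitly pass to $\ini(h_k)=M_k'\cdot\ini(g_k')$ and note that $x\in R_1$ forces $d_x(\ini(h_k))=d_x(M_k')$, whereas the paper leaves this step implicit.
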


\begin{proof}
Let  $G=\{h_1, \ldots , h_m\}$ be the reduced Gr\"{o}bner basis for $\ini(I)$ and notice that by Proposition~\ref{GrobnerArgument}, $G$ is an $(R_1, R_2)$-factorable  Gr\"{o}bner basis.
Let  $h_i = N_ih_i'$ for $i= 1, \ldots, m$ with $N_j$ monomials in $R_1$ and $h_j' \in R_2$.

By Proposition~\ref{GrobnerArgument} , each $N_i$ is the least common multiple of some of the $M_1, \ldots, M_p$. Thus, the assertion follows by observing that
\begin{align*}
d_x(\lcm(M_{i_1}, \ldots, M_{i_{\ell}})) & = \max \{d_x(M_{i_j}) \mid 1 \leq j \leq {\ell}\} \\
& \leq \max \{d_x(M_i) \mid 1 \leq i \leq p\}.
\end{align*}
\end{proof}

Additional control over the maximal degree of a variable will be needed in special cases once we begin to form the initially regular sequences. The following lemma provides such control. Recall that we are still in the setting of Set-up~\ref{setup1}.

\begin{lemma}\label{degreesFixed}
Let $I$ be a monomial ideal in $R$. Let $J=(I, b_0+b_1+\ldots + b_t)$. Then $d_{x_i}(I) \geq d_{x_i}(\ini(J))$ for all $1 \leq i \leq r$. Furthermore, suppose that $>$ is a  monomial order such that $b_0 > b_j$ for all $j = 1, \ldots, t$. Then $d_{b_0}(\ini(J)) = 1$.
\end{lemma}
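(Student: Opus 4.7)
The plan is to invoke Corollary~\ref{degrees} for the first assertion and to read off the second from the observation that $b_0$ becomes a minimal generator of $\ini(J)$.

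For the first statement, I would first verify that $J$ is $(R_1,R_2)$-factorable. Each minimal monomial generator $M_j$ of $I$ factors uniquely as $M_j = M_j'M_j''$ with $M_j' \in R_1$ and $M_j'' \in R_2$ monomials, and the linear form $b_0+\cdots+b_t$ carries the trivial factorization $1 \cdot (b_0+\cdots+b_t)$. Since $x_i$ does not divide any $R_2$-part $M_j''$ or the factor $b_0+\cdots+b_t$, the $x_i$-degree of $M_j$ coincides with $d_{x_i}(M_j')$, so $\max_j d_{x_i}(M_j') = d_{x_i}(I)$. Applying Corollary~\ref{degrees} to this $(R_1,R_2)$-factorization of $J$ then yields $d_{x_i}(\ini(J)) \leq \max\{d_{x_i}(M_j'), d_{x_i}(1)\} = d_{x_i}(I)$, independent of the chosen term order.

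For the second statement, the ordering assumption $b_0 > b_j$ guarantees that $b_0 = \ini(b_0+\cdots+b_t) \in \ini(J)$, so the variable $b_0$ is itself a minimal monomial generator of $\ini(J)$. No other minimal generator of $\ini(J)$ can be divisible by $b_0$, for such a generator would lie in the ideal $(b_0)$ and would therefore be redundant in the minimal generating set. Consequently, the variable $b_0$ appears only in the minimal generator $b_0$ itself, and it does so to the first power, giving $d_{b_0}(\ini(J)) = 1$.

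I do not foresee a serious obstacle: both parts reduce to applying an already-established bound and then inspecting which monomials can arise as minimal generators of the relevant initial ideal. The only point that warrants care is confirming that adjoining the linear form $b_0+\cdots+b_t$ does not enlarge the $x_i$-degree of any $R_1$-part occurring in a Gr\"obner basis element of $J$, and this is immediate from the trivial factorization $1 \cdot (b_0+\cdots+b_t)$ together with Corollary~\ref{degrees}.
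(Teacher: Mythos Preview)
Your proposal is correct and follows essentially the same approach as the paper: both verify that $J$ is $(R_1,R_2)$-factorable (with the linear form carrying the trivial $R_1$-part $1$), invoke Corollary~\ref{degrees} for the first assertion, and deduce the second from the fact that $b_0$ is the leading term of $b_0+\cdots+b_t$. Your treatment of the second statement spells out the minimal-generator argument a bit more explicitly than the paper does, but the underlying idea is identical.
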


\begin{proof} Since all monomial ideals are $(R_1, R_2)$-factorable, let $I = (f_1, \ldots, f_p)$, where $f_i=M_ig_i$ with $M_i \in R_1$, $g_i \in R_2$, and $M_i,g_i$ monomials. Setting $M_{p+1}=1$ we have $f_{p+1} = b_0 + b_1 + \ldots + b_t = M_{p+1} f_{p+1}$ and thus $J$ is an $(R_1, R_2)$-factorable  ideal. Thus, by Proposition~\ref{GrobnerArgument}, there exists an $(R_1, R_2)$-factorable Gr\"{o}bner basis of $J$. The first statement follows from Corollary~\ref{degrees} after noting that for every variable $x_i$,
$$d_{x_i}(I) = \max \{ d_{x_i}(f_\ell) \mid 1 \leq \ell \leq p+1\}= \max \{ d_{x_i}(M_\ell) \mid 1 \leq \ell \leq p+1\}   .$$
The second statement is obvious since the leading term of $b_0 + \ldots + b_t$ is $b_0$.
\end{proof}

The next result shows that when $I$ is a monomial ideal and we form a colon ideal with a sum of variables, then under some mild conditions on the degrees of the variables, the resulting ideal is still a monomial ideal.

\begin{lemma}\label{r=1 case}
Let $I$ be a monomial ideal in a polynomial ring $R$. Suppose that $h=\sum \limits_{i=0}^{t}b_i$ is a sum of distinct variables in $R$ and suppose that $d_{b_i}(I) \leq 1$ for all $i \geq 1$. Then $I:h$ is a monomial ideal.
\end{lemma}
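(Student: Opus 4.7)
The plan is to pass to the irreducible decomposition of the monomial ideal $I$ and verify that each of the resulting colon ideals is monomial. I would begin by writing $I=\bigcap_{\alpha} Q_{\alpha}$, where each $Q_\alpha$ is an irreducible monomial ideal, hence of the form $(x_{i_1}^{a_1},\ldots,x_{i_k}^{a_k})$ and $\sqrt{Q_\alpha}$-primary with $\sqrt{Q_\alpha}=(x_{i_1},\ldots,x_{i_k})$. A standard feature of the irreducible decomposition of monomial ideals (see \cite{HH}) is that $d_x(Q_\alpha)\le d_x(I)$ for every variable $x$; in particular, the hypothesis $d_{b_j}(I)\le 1$ for $j\ge 1$ yields $d_{b_j}(Q_\alpha)\le 1$ for every $\alpha$ and every $j\ge 1$. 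Since colon distributes over intersection, $I:h=\bigcap_{\alpha}(Q_\alpha:h)$, so it suffices to check that each $Q_\alpha:h$ is monomial.

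Next I would fix $\alpha$ and split into two cases according to whether $h\in \sqrt{Q_\alpha}$. Since $\sqrt{Q_\alpha}$ is a monomial prime, generated by a subset of the variables, and since $h$ is a sum of distinct variables, a degree-one comparison shows that $h\in\sqrt{Q_\alpha}$ if and only if every $b_j$, $0\le j\le t$, appears among the generators of $\sqrt{Q_\alpha}$. In the first case, when some $b_j\notin\sqrt{Q_\alpha}$, the element $h$ is a nonzerodivisor modulo the primary ideal $Q_\alpha$, and so $Q_\alpha:h=Q_\alpha$ is monomial. In the second case, when all $b_0,\ldots,b_t$ lie in $\sqrt{Q_\alpha}$, the degree bound $d_{b_j}(Q_\alpha)\le 1$ for $j\ge 1$ forces $b_j\in Q_\alpha$ itself for each $j\ge 1$. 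Thus $b_1+\cdots+b_t\in Q_\alpha$, and subtracting this from $h$ gives $Q_\alpha:h=Q_\alpha:b_0$, which is monomial because the colon of a monomial ideal by a single variable is monomial.

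Putting the two cases together, each $Q_\alpha:h$ is monomial, and hence $I:h=\bigcap_\alpha (Q_\alpha:h)$ is a monomial ideal. The one subtle ingredient I would need to record carefully is the inheritance of the degree restriction $d_{b_j}\le 1$ by the irreducible components of $I$; once that observation is in hand, the computation of each individual colon reduces to standard facts about primary decomposition and monomial colons.
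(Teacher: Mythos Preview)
Your proof is correct and takes a genuinely different route from the paper's. The paper argues directly: assuming $fh\in I$ with $f$ homogeneous and no single monomial term $f_i$ of $f$ satisfying $f_ih\in I$, it selects a pair $(\alpha,\beta)$ with $\beta\ge1$, $f_\alpha b_\beta\notin I$, and $d_{b_\beta}(f_\alpha)$ maximal; the cancellation forced by $fh\in I$ then produces another term $f_\gamma$ with $d_{b_\beta}(f_\gamma)=d_{b_\beta}(f_\alpha)+1$, so by maximality $f_\gamma b_\beta\in I$, and since $d_{b_\beta}(f_\gamma b_\beta)\ge2$ the hypothesis $d_{b_\beta}(I)\le1$ forces $f_\gamma\in I$, a contradiction. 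Your argument instead passes to the irreducible decomposition and disposes of each component by the clean dichotomy ``$h$ is a nonzerodivisor'' versus ``$Q_\alpha:h=Q_\alpha:b_0$''. Your approach is more structural and arguably tidier; the paper's is more elementary in that it needs no decomposition theory, and its hands-on cancellation argument is what makes the relaxation in Remark~\ref{degree 1 relaxed} (allowing each $b_i$, $i\ge1$, to appear to exponent either $0$ or a fixed constant in every generator) immediate, whereas that extension is less transparent from the decomposition viewpoint. The one point you should justify explicitly is $d_{b_j}(Q_\alpha)\le d_{b_j}(I)$ for the irredundant components: if some $Q_\alpha$ had a generator $b_j^{a}$ with $a>d_{b_j}(I)$, then no minimal generator of $I$ could lie in $Q_\alpha$ via that power, so dropping $b_j^{a}$ yields a strictly smaller irreducible ideal still containing $I$, contradicting the uniqueness of the irredundant irreducible decomposition.
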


\begin{proof}
Suppose that $f \in R$ and $fh \in I$. Since $I$ is monomial and $h$ is homogeneous, we may assume that $f$ is a homogeneous polynomial.  Let $f = \sum \limits_{i = 1}^{\ell} f_i=\sum \limits_{i=1}^{\ell}c_if_i'$, where $f_1', \ldots, f_{\ell}'$ are distinct monomials of the same degree and $f_i=c_if_i'$ with $c_i \in k$  for all $i$. If $f_i(b_0+\ldots+b_t) \in I$ for some $1\leq i \leq \ell$ and $\ell \geq 2$, then we may replace $f$ by $f - f_i$. Thus, we can assume that either $\ell =1$ or $\ell \geq 2$ and $f_ih \not\in I$ for all $i = 1, \ldots, \ell$. It suffices to show that $\ell=1$.

Suppose that $\ell\geq 2$ and for every $1\leq i\leq \ell$ we have $f_ih=f_i(b_0+\ldots +b_t) \not\in I$. Since $I$ is a monomial ideal, this implies that for each $1\leq i \leq \ell$, we have $f_ib_j \not\in I$ for some $ 0\leq j \leq t$. Observe that among those pairs $(i,j)$ such that $f_ib_j \not\in I$, there must be such a pair in which $j \not= 0$. Indeed, if that is not the case then we would have $f_ib_j \in I$ for all $i$ and $j > 0$ and hence $f_ib_0 \not\in I$ for all $i$. This would imply that $fb_0 = f(b_0 + \ldots + b_t) - f(b_1 + \ldots +b_t) \in I$, a contradiction.

Now, among all pairs $(i,j)$ (with $j > 0$) such that $f_ib_j \not\in I$, let $(\alpha,\beta)$ be such a pair so that $d_{b_\beta}(f_\alpha)$ is maximal possible. Since $f_\alpha b_\beta \not\in I$, then as $f(b_0 + \ldots + b_t) \in I$, there must exist $(\gamma_i,\delta_i)$, for $1 \leq i \leq v$, with $\delta_i \not= \beta$ such that $f_\alpha b_\beta + \sum\limits_{i=1}^v f_{\gamma_i} b_{\delta_i}=0$. That is, $c_{\alpha}f'_\alpha b_\beta + \sum\limits_{i=1}^v c_{\gamma_{i}}f'_{\gamma_i} b_{\delta_i}=0$, where $c_{\alpha}+\sum\limits_{i=1}^v c_{\gamma_{i}} = 0$ and $f'_\alpha b_\beta = f'_{\gamma_i} b_{\delta_i}$ for each $i$. In particular, set $f_\gamma = f_{\gamma_{1}}$ and $b_{\delta}=b_{\delta_{1}}$. Then $f_{\gamma} b_\delta \not\in I$.

Observe further that $d_{b_\beta}(f_\gamma) = d_{b_\beta}(f_\gamma b_\delta) = d_{b_\beta}(f_\alpha b_\beta) = d_{b_\beta}(f_\alpha) + 1$. Thus, by the maximality of $d_{b_\beta}(f_\alpha)$, we must have $f_\gamma b_\beta \in I$. However, $d_{b_\beta}(f_\gamma b_\beta) = d_{b_\beta}(f_\gamma) + 1 \geq 2$, and since $d_{b_\beta}(I)\leq 1$ we must have that  $f_\gamma \in I$, which is a contradiction.
\end{proof}

We are now ready to begin creating an initially regular sequence on $R/I$.

\begin{lemma}\label{regular}
Suppose $I$ is a monomial ideal in a polynomial ring $R$. Let $b_0, b_1, \ldots, b_t$ be distinct variables of $R$ such that:
\begin{enumerate}[$($a$)$]
\item $d_{b_i}(I) \leq 1$, for all $i \geq 1$; and
\item if $M$ is a monomial generator of $I$ and $b_0$ divides $M$, then there exists an $i \geq 1$ such that $b_i$ divides $M$.
\end{enumerate}
Then $b_0 + b_1 + \ldots + b_t$ is a regular element on $R/I$.
\end{lemma}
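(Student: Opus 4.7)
The plan is to reduce the problem to monomials via Lemma~\ref{r=1 case}, then use conditions (a) and (b) to chase divisibility among the generators of $I$.

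First, since $d_{b_i}(I) \leq 1$ for all $i \geq 1$, Lemma~\ref{r=1 case} applies and $I : h$ is a monomial ideal. Consequently, to prove $h$ is regular on $R/I$ it suffices to show: if $f$ is a \emph{monomial} with $fh \in I$, then $f \in I$. Since the $b_j$ are distinct variables, the products $fb_0, fb_1, \ldots, fb_t$ are pairwise distinct monomials, and $fh = \sum_j fb_j \in I$ together with the fact that $I$ is monomial forces $fb_j \in I$ for every $j = 0, 1, \ldots, t$.

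Now I focus on $fb_0 \in I$ and pick a monomial generator $M$ of $I$ that divides $fb_0$. If $b_0 \nmid M$, then $M \mid f$ directly and we are done. Otherwise $b_0 \mid M$, and by hypothesis (b) there exists some $i \geq 1$ such that $b_i \mid M$. Writing $M = b_0 \cdot (M/b_0)$, the divisibility $M \mid fb_0$ gives $M/b_0 \mid f$; since $b_i \neq b_0$ still divides $M/b_0$, this yields $b_i \mid f$, hence $d_{b_i}(fb_i) \geq 2$.

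To finish, I invoke $fb_i \in I$: pick a monomial generator $M^*$ of $I$ with $M^* \mid fb_i$. By hypothesis (a), $d_{b_i}(M^*) \leq 1$, while $d_{b_i}(fb_i) \geq 2$. Writing $fb_i = M^* g$ for a monomial $g$, we get $d_{b_i}(g) \geq 1$, so $b_i \mid g$, and therefore $M^* \mid f$, giving $f \in I$ as desired. The only step requiring genuine care is this last one: condition (a) is used precisely to force the extra factor of $b_i$ to land in the cofactor $g$ rather than in $M^*$, which is what converts the conclusion $M^* \mid fb_i$ into $M^* \mid f$. The role of condition (b) is complementary: it guarantees that once a generator involves $b_0$, the process of stepping from ``$f b_0 \in I$'' to ``$f b_i \in I$'' always launches into a variable $b_i$ whose exponent is capped by (a).
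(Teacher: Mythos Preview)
Your proof is correct and follows essentially the same approach as the paper's: reduce to monomials via Lemma~\ref{r=1 case}, use $fb_0\in I$ together with condition~(b) to force $b_i\mid f$ for some $i\ge 1$, and then use $fb_i\in I$ together with condition~(a) to conclude $f\in I$. Your write-up is in fact slightly more explicit than the paper's in justifying why each $fb_j\in I$ (via distinctness of the monomials $fb_j$) and in spelling out the final step that $d_{b_i}(M^*)\le 1$ pushes the extra $b_i$ into the cofactor.
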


\begin{proof}
Suppose that $f \in R$ and $f(b_0 + \ldots + b_t) \in I$. We shall show that $f \in I$. Since $I$ is a monomial ideal, we may assume that $f$ is a monomial by Lemma~\ref{r=1 case}.
Then
$f(b_0 + \ldots + b_t) \in I$ implies that $fb_j \in I$ for every $j \geq 0$. Since $fb_0 \in I$, we can write $fb_0 = Mg$ for a minimal generator $M$ of $I$. It follows that either $b_0 \mid g$ in which case  $f \in I$, or $b_0 \mid M$. If $b_0 \mid M$, condition (b) implies that $b_i$ divides $M$ for some $i \geq 1$. This, in particular, shows that $b_i \mid f$. Now, consider $fb_i \in I$. Noting that $d_{b_i}(fb_i) \geq 2$, condition (a) then implies that $f \in I$.
\end{proof}

\begin{remark}\label{degree 1 relaxed}
Notice that condition $($a$)$ in Lemma~\ref{regular} requires that all the $b_i$ have degree at most one for all $i\geq 1$. This allows us to consider polynomials where $b_0$ has a higher degree. A careful examination of the proofs of Lemmas~\ref{r=1 case} and \ref{regular} shows that the condition $d_{b_i}(I) \leq 1$ for all $i \geq 1$ can be relaxed to instead require that the degree of $b_i$ in all monomial generators of $I$ is either $0$ or a fixed constant.
\end{remark}

We are ready to state our primary theorem, which provides a process for creating an initially regular sequence for any ideal in a polynomial ring. Note that in this first version, we are giving the basics. There are special cases where we can fine-tune the process to achieve improved lower bounds on the depth. These cases will be discussed in detail in Section~\ref{extensions}.

\begin{theorem} \label{main}
Let $I$ be an ideal in a polynomial ring $R$ and $>_1$ a term order. Suppose that $\{b_{i,j} \mid 1 \leq i \leq q, 0 \leq j \leq t_i\}$ are distinct variables of $R$ such that:
\begin{enumerate}[$($1$)$]
\item $d_{b_{i,j}}(\ini_{>_1}(I)) \leq 1$, for all $i \geq 1$ and $j \geq 1$; and
\item for each $i = 1, \ldots, q$, if $M$ is a monomial generator of $\ini_{>_1}(I)$ and $b_{i,0}$ divides $M$, then there exists a $j \geq 1$ such that $b_{i,j}$ divides $M$.
\end{enumerate}
Let $f_i = \sum_{j=0}^{t_i} b_{i,j}$, for $1 \leq i \leq q$. Then $f_1, \ldots, f_q$ is an initially regular sequence on $R/I$ with respect to $>_1$ and any term orders $>_2, \ldots , >_q$ for which $b_{i,0} >_{i+1} b_{i,j}$ for all $1\leq i \leq q-1$ and all $j$. In particular, $\depth R/I \geq q$.
\end{theorem}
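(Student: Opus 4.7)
The plan is induction on $i$ from $1$ to $q$, carrying along two simultaneous claims: (i) $I_i$ is a monomial ideal satisfying the analogs of hypotheses (1) and (2) of the theorem for the \emph{remaining} variables $\{b_{k,j} : i \leq k \leq q\}$, and (ii) $f_i$ is a regular element on $R/I_i$. The depth bound is then immediate from Proposition~\ref{ini reg bound}. The base case $i = 1$ is essentially a repackaging of Lemma~\ref{regular}: the ideal $I_1 = \ini_{>_1}(I)$ is monomial, and the hypotheses (1) and (2) of the theorem at $i=1$ are exactly the hypotheses (a), (b) of Lemma~\ref{regular} for the variables $b_{1,0}, \ldots, b_{1,t_1}$, so (ii) holds; (i) is just a restatement of the given hypotheses for all $k$.

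For the inductive step, suppose (i) and (ii) hold for $i$ and set $I_{i+1} := \ini_{>_{i+1}}(I_i, f_i)$, which is automatically a monomial ideal. The crucial point is to verify (i) for $I_{i+1}$, since once this is done, (ii) for $i+1$ follows from Lemma~\ref{regular} applied to $I_{i+1}$ with the variables $b_{i+1,0}, \ldots, b_{i+1,t_{i+1}}$. To establish (i), split the variables of $R$ into $R_2' := k[b_{i,0}, \ldots, b_{i,t_i}]$ and $R_1'$ (the polynomial ring on every remaining variable). Since $I_i$ is a monomial ideal it is $(R_1', R_2')$-factorable, and $f_i \in R_2'$, so $(I_i, f_i)$ is $(R_1', R_2')$-factorable. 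Proposition~\ref{GrobnerArgument} then produces an $(R_1', R_2')$-factorable Gr\"obner basis for $(I_i, f_i)$ with respect to $>_{i+1}$ whose $R_1'$-monomial parts are lcms of the $R_1'$-monomial parts of the original generators. Consequently every minimal monomial generator of $I_{i+1}$ has the form $M \cdot \mu$, where $M \in R_1'$ is such an lcm and $\mu$ is a monomial in $R_2'$. All variables $b_{k,j}$ with $k \geq i+1$ lie in $R_1'$, so hypothesis (1) for $I_{i+1}$ is inherited from (1) for $I_i$ via Corollary~\ref{degrees}. Hypothesis (2) transfers as follows: if $b_{k,0} \mid M\mu$ for some $k \geq i+1$, then $b_{k,0} \mid M$, so $b_{k,0}$ divides the $R_1'$-part of some minimal generator $P$ of $I_i$ (noting that the $R_1'$-part of $f_i$ is $1$, contributing nothing); applying (2) for $I_i$ to $P$ furnishes a $b_{k,j}$ with $j \geq 1$ dividing $P$, and since that variable lies in $R_1'$, it divides $M$, hence divides $M\mu$.

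I expect the main obstacle to be the preservation of hypothesis (2) from $I_i$ to $I_{i+1}$: after adjoining $f_i$ and computing the new initial ideal, it is not obvious a priori that no minimal generator of $I_{i+1}$ acquires a fresh occurrence of $b_{k,0}$ (for some $k \geq i+1$) without a companion $b_{k,j}$. The $(R_1', R_2')$-factorability framework of Section~\ref{sec.construction} is precisely what controls this: it forces the $R_1'$-monomial part of every Gr\"obner basis element to be an lcm of $R_1'$-parts already present in the generating set, so no new $R_1'$-variable can appear without being inherited from an existing generator, where hypothesis (2) has already supplied the required companion.
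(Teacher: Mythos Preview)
Your proof is correct and follows essentially the same approach as the paper's own argument: both proceed by induction, use Lemma~\ref{regular} to obtain regularity of each $f_i$, and invoke the $(R_1',R_2')$-factorability framework (Proposition~\ref{GrobnerArgument} together with Corollary~\ref{degrees}/Lemma~\ref{degreesFixed}) with $R_2' = k[b_{i,0},\ldots,b_{i,t_i}]$ to show that conditions (1) and (2) are inherited by the next initial ideal. The only difference is cosmetic: the paper phrases the induction as reducing from $q$ to $q-1$ by passing to $J = \ini_{>_2}(I,f_1)$, whereas you carry the invariant forward step by step.
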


\begin{proof} The last statement follows from Proposition~\ref{ini reg bound}. By replacing $I$ with $\ini_{>_1}(I)$ we may assume first that $I$ is a monomial ideal.  We proceed by induction on $q$. For $q=1$, the result follows from Lemma~\ref{regular}. Suppose that $q \geq 2$.

By Lemma~\ref{regular}, $f_1$ is regular on $I$. Let $J = \ini_{>_2}(I, f_1)$. It suffices to show that $f_2, \ldots, f_q$ form an initially regular sequence on $R/J$ with respect to the remaining term orders. Indeed, by the induction hypothesis, it is enough to show that $\{b_{i,j} \mid 2 \leq i \leq q, 0 \leq j \leq t_i\}$ satisfy conditions (1) and (2) of the hypotheses relative to $J$.

By Lemma~\ref{degreesFixed} (by letting $R_2 = k[b_{1,0}, \ldots, b_{1,t_1}]$), we first have $d_{b_{i,j}}(J) \leq d_{b_{i,j}}(I) \leq 1$ for all $2 \leq i \leq q$ and $1 \leq j \leq t_i$.
Now, suppose that for some $2 \leq i \leq q$ we have $b_{i,0} \mid M$, for some minimal monomial generator $M$ of $J$. By Proposition~\ref{GrobnerArgument}, $M$ can be written as $M = Ng$, where $N$ is the least common multiple of a collection $\{M_{i_1}, \ldots, M_{i_{\mu}}\}$ of $R_1$-monomial factors of minimal generators of $I$ and $g$ is in $R_2$. Since $b_{i,0}$ divides $M$, then $b_{i,0} \mid N$, since $b_{i,0} \not\in R_2$.
Thus $b_{i,0}$ must divide $M_{i_\ell}$ for some $1 \leq \ell \leq \mu$. In particular, $b_{i,0}$ must divide a minimal generator $M_{i_{\ell}}g_{i_{\ell}}$ of $I$, where $g_{i_{\ell}} \in R_2$. By the hypothesis, this implies that there exists $1 \leq j \leq t_i$ such that $b_{i,j}$ divides $M_{i_\ell}g_{i_{\ell}}$, i.e., $b_{i,j}$ divides $M_{i_{\ell}}$.
The result now follows.
\end{proof}

Notice that Theorem~\ref{main simple} follows immediately by taking $>_i$ to be the same term order for all $ 1\leq i \leq q$. Theorem~\ref{main} leads us to an algorithm for constructing an initially regular sequence for any ideal in a polynomial ring based on the combinatorial data of an appropriate hypergraph.

\begin{algorithm} \label{alg}
Given an arbitrary ideal $I \subseteq R = k[x_1, \ldots, x_n]$:
\begin{itemize}
\item Step 1: Choose a term order in $R$ and compute $J = \ini(I)$.
\item Step 2: Let $H = (V_H, E_H)$ be the underlying hypergraph associated to $J$ (with degree at each vertex representing the highest power to which the vertex appears in the generators of $J$).
\item Step 3: Set $L = [ \quad ]$ be an empty list and $S=\emptyset$.
\item Step 4: Pick a vertex $b_0\in V_H\setminus S$.
\item Step 5: Let $\mathcal{E}=\{e \in E_H \mid b_0\in e\}$ be edges of $H$ containing $b_0$. Select a set of vertices $B=\{b_1, \ldots, b_t\}$ such that $b_i \not\in S$ for all $i$, $B\cap e \not= \emptyset$ for all $e \in \mathcal{E}$, and for all $i$, $b_i$ has degree at most $1$. (To optimize the process, select $B$ to be minimal with respect to inclusion).
\item Step 6: Let $f = b_0 + \sum_{i=1}^t b_i$. Append $f$ to $L$ and add $b_0, b_1, \ldots, b_t$ to $S$.
\item Step 7: Repeat Steps 4 -- 6 until either $S=V_H$ or for any $b_0\in V_H\setminus S$ there does not exist a set $B$ satisfying the conditions above.
\item Output: the list $L$, which forms an initially regular sequence on $R/I$ with respect to an appropriate term order.
\end{itemize}
\end{algorithm}

\begin{remark} There are freedoms of choice in Algorithm~\ref{alg} which, in practice, can be utilized to give us a sharper bound for the depth of $R/I$. Specifically,
\begin{enumerate}
\item the initial term order in $R$ can be chosen so that $J = \ini(I)$ and $H = H(J)$ are combinatorially easy to visualize, as was done in Example~\ref{extended pentagon}; and
\item (in high generating degrees) the variables $b_i$ in Step 5 can be chosen appropriately so that the iterated process can be done as many times as possible.
\end{enumerate}
\end{remark}

In condition~$($1$)$ of Theorem~\ref{main} we require the degree of all the variables we use to build the initially regular sequences to be one, with the exception of the degrees of the variables $b_{i,0}$. The following example illustrates this.

\begin{example} \label{higher power example}
Let $R=\mathbb{Q}[a,b,c,d]$ and let $I=(a^2b, abcd, c^2d)$ be the ideal corresponding to the hypergraph $G$ depicted below, where the degrees of the vertices $a$ and $c$ indicate the maximal power to which they appears in a minimal generator of $I$.

\begin{tikzpicture}

    \tikzstyle{point}=[circle,thick,draw=black,fill=black,inner sep=0pt,minimum width=4pt,minimum height=4pt]

 \node (a)[point,label={[xshift=-0.3cm, yshift=0 cm] $\bf{a}$(2)}] at (0.5,1) {};

    \node (b)[point,label={[label distance=0cm]3:$\bf{b}$}] at (2,2) {};

\node (c)[point,label={[label distance=0cm]3:$\bf{c}$(2)}] at (3.5,1) {};

\node (d)[point,label={[label distance=0cm]1:$\bf{d}$}] at (2,0) {};

 \draw[line width=0.5mm, red] (a.center) -- (b.center);
\draw(b.center)--(c.center);
 \draw (a.center) -- (d.center);
 \draw[line width=0.5mm, red] (c.center) -- (d.center);

 \node (a)[point,label={[xshift=-0.3cm, yshift=0 cm] $\bf{a}$(2)}] at (0.5,1) {};

    \node (b)[point,label={[label distance=0cm]3:$\bf{b}$}] at (2,2) {};

 \draw[pattern=vertical lines] (a.center) -- (b.center) -- (c.center) -- (d.center) -- cycle;

\node (c)[point,label={[label distance=0cm]3:$\bf{c}$(2)}] at (3.5,1) {};

\node (d)[point,label={[label distance=0cm]1:$\bf{d}$}] at (2,0) {};

\end{tikzpicture}

Notice that by Theorem~\ref{main} and Corollary~\ref{regular sums of two} we have $a+b, c+d$ is both a regular and an initially regular sequence on $R/I$ with respect to any term order such that $a>b$ and $c>d$. Using Macaulay~2~\cite{M2} we can confirm that $\depth R/I=2$.

\end{example}

In the following examples we apply Algorithm~\ref{alg} to obtain initially regular sequences and bounds on the depth of $R/I$. We also explain how the bounds obtained compare to known bounds.

\begin{example} \label{better than epsilon}
Let $I=(x_1x_2,x_2x_3, x_1x_3,x_2x_4,x_4x_5,x_3x_6,x_6x_7) \subseteq R=\mathbb{Q}[x_1,\ldots, x_7]$ be the edge ideal of the graph depicted below.

\setlength{\unitlength}{0.7cm}
\begin{tikzpicture}

    \tikzstyle{point}=[circle,thick,draw=black,fill=black,inner sep=0pt,minimum width=4pt,minimum height=4pt]

 \node (a)[point,label={[label distance=0cm]:$x_1$}] at (0.5,0.5) {};

    \node (b)[point,label={[label distance=0cm]3:$x_2$}] at (2,2) {};

\node (d)[point,label={[label distance=0cm]3:$x_4$}] at (3.5,2) {};

 \node (e)[point,label={[label distance=0cm]3:$x_5$}] at (5,2) {};

 \node (g)[point,label={[label distance=0cm]3:$x_7$}] at (5,0.5) {};

    \node(c)[point,label={[label distance=0cm]3:$x_3$}] at (2,0.5) {};

   \node(f)[point,label={[label distance=0cm]3:$x_6$}] at (3.5,0.5) {};

\draw (a.center) -- (b.center);
\draw (a.center) -- (c.center);
\draw(b.center)--(d.center);
\draw(d.center)--(e.center);
\draw(b.center)--(c.center);
\draw (c.center) -- (f.center);
\draw (f.center) -- (g.center);
\end{tikzpicture}

Previously known bounds from \cite{DS} give $\depth R/I \ge 3$. Theorem~\ref{main} confirms that $\depth R/I \ge 3$ in this example. Notice that $x_5+x_4, x_7+x_6, x_1+x_2+x_3$ is an initially regular sequence on $R/I$, where  $x_1>x_2>x_3>x_5>x_4>x_7>x_6$. Computations in Macaulay~2~\cite{M2} indeed verify that $\depth R/I=3$.  It is also worth noting that $ x_5+x_4, x_7+x_6, x_1+x_2+x_3$ is a regular sequence on $R/I$ as well by Corollary~\ref{regular sums of two}.
\end{example}

The next example shows that in the case of hypergraphs a careful selection of the vertex sets used can result in a significant improvement from known results.

\begin{example}\label{better than epsilon and tau, hypergraph}
Let $I=(x_1x_2x_3, x_2x_3x_4,x_2x_5x_6,x_3x_7x_8,x_4x_9x_{10}) \subseteq R=\mathbb{Q}[x_1,\ldots, x_{10}]$ be the edge ideal of the hypergraph  depicted below. 

\begin{tikzpicture}

    \tikzstyle{point}=[circle,thick,draw=black,fill=black,inner sep=0pt,minimum width=4pt,minimum height=4pt]

 \node (a)[point,label={[label distance=0cm]:$x_1$}] at (0.5,1.25) {};

    \node (b)[point,,label={[xshift=0.4cm, yshift=-0.3 cm]:$x_2$}] at (2,2) {};

\node (d)[point,,label={[xshift=0cm, yshift=0.1 cm]: $x_4$}] at (3.5,1.25) {};
  \node(c)[point,,label={[xshift=0.4cm, yshift=-0.4 cm]:$x_3$}] at (2,0.5) {};

   \node (e)[point,label={[label distance=0cm]:$x_5$}] at (1,3.25) {};

    \node (f)[point,label={[label distance=0cm]3:$x_6$}] at (3,3.25) {};

\node (g)[point,,label={[xshift=-0.4cm, yshift=-0.3 cm]: $x_7$}] at (1,-.75) {};
  \node(h)[point,,label={[xshift=0.4cm, yshift=-0.3 cm]:$x_8$}] at (3,-.75) {};
 \node (i)[point,,label={[xshift=0.4cm, yshift=-0.3 cm]:$x_9$}] at (5,2) {};

    \node (j)[point,,label={[xshift=0.4cm, yshift=-0.3 cm]:$x_{10}$}] at (5,.5) {};

   \draw[pattern=north east lines] (a.center) -- (c.center) -- (b.center) -- cycle;
   \draw[pattern=north west lines] (b.center) -- (c.center) -- (d.center) -- cycle;
\draw (a.center) -- (b.center);
\draw (a.center) -- (c.center);
\draw (b.center)-- (d.center);
\draw (b.center)-- (c.center);
\draw (c.center)-- (d.center);
\draw [pattern=north east lines] (b.center) -- (e.center) -- (f.center) -- (b.center);
\draw[pattern=north east lines] (c.center) -- (g.center) -- (h.center) -- (c.center);
\draw[pattern=north west lines] (d.center) -- (i.center) -- (j.center) -- (d.center);

\end{tikzpicture}

The previously known bound of \cite[Theorem~3.2]{DS2} shows $\depth R/I \ge 1$. The bounds from this section ensure $\depth R/I \geq 4$.
Notice that $x_1+x_2, x_5+x_6, x_7+x_8, x_9+x_{10}$ is both a regular and an initially regular sequence on $R/I$, where $x_1 > x_2 > x_5 >x_6 > x_7 > x_8 > x_3 > x_9 > x_{10} > x_4$  by Theorem~\ref{main} and Corollary~\ref{regular sums of two}.

Using Macaulay~2~\cite{M2} we have that $\depth R/I=6$. In the next section, we show that our results can be further refined to improve accuracy. For example, using Theorem~\ref{iterate repeated neighbors} we will see that $x_1+x_2, x_5+x_6, x_7+x_8, x_8+x_3, x_9+x_{10},x_{10}+x_4$ is both a regular and an initially regular sequence on $R/I$  (relative to the order above) and thus achieving the actual bound for $\depth R/I$.
\end{example}

\section{Extensions of initially regular sequences}\label{extensions}

In this section, we discuss some extensions of Theorem~\ref{main}, where initially regular sequences and regular sequences can be combined to get longer initially regular sequences, and where the reuse of variables in the construction of initially regular sequences is possible.

We begin by showing that under suitable assumptions initially regular sequences remain initially regular after enlarging the ideal appropriately.

\begin{proposition}\label{append a seq}
Let $I$ be a monomial ideal in a polynomial ring $R$. Let $B = \{b_{i,j} \mid 1 \leq i \leq q, 0 \leq j \leq t_i\}$ be distinct variables in $R$ satisfying the conditions of Theorem~\ref{main}. Let $f_i = \sum_{j=0}^{t_i} b_{i,j}$, for $i = 1, \ldots, q$. Let $Y = \{y_1, \ldots, y_r\}$ be a collection of variables in $R$ that is disjoint from $B$, and let $h_1, \ldots, h_{\ell} \in k[y_1, \ldots, y_r] \subseteq R$. Then $f_1, \ldots, f_q$ is an initially regular sequence on $R/(I, h_1, \ldots, h_{\ell})$.
\end{proposition}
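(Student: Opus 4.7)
The plan is to invoke Theorem~\ref{main} directly on the enlarged ideal $\bar I := (I, h_1, \ldots, h_\ell)$, using the same set of variables $B = \{b_{i,j}\}$ and any term orders $>_2, \ldots, >_q$ permitted by Theorem~\ref{main}. The task then reduces to checking that conditions (1) and (2) of Theorem~\ref{main}, which hold for $\ini_{>_1}(I) = I$ by hypothesis, continue to hold for $\ini_{>_1}(\bar I)$. Since the $h_k$'s are not in general monomials, we cannot simply read off $\ini_{>_1}(\bar I)$ from the generators, and so we must control what the Gr\"obner basis computation of $\bar I$ does to the degrees of the variables in $B$. This is the main obstacle in the proof.

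The natural tool is the $(R_1, R_2)$-factorability framework from Section~\ref{sec.construction}, applied with $R_1 := k[B]$ and $R_2 := k[V \setminus B]$, where $V$ is the set of all variables of $R$. Each monomial generator of $I$ splits cleanly as its $B$-part (a monomial in $R_1$) times its non-$B$-part (a monomial in $R_2$), while each $h_k$ lies in $k[Y] \subseteq R_2$ and so factors as $1 \cdot h_k$ with trivial $R_1$-part. Thus $\bar I$ is an $(R_1, R_2)$-factorable ideal, and Proposition~\ref{GrobnerArgument} yields a reduced Gr\"obner basis of $\bar I$ with respect to $>_1$ whose elements all have the form $Mg$, where $M \in R_1$ is an lcm of $R_1$-factors of some subset of the original generators and $g \in R_2$. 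Crucially, since the $R_1$-factor of each $h_k$ equals $1$, the $R_1$-factor $M$ of every Gr\"obner basis element is in fact an lcm of $B$-parts of monomial generators of $I$ alone.

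From here both conditions should follow quickly. For condition (1), I would apply Corollary~\ref{degrees} to $\bar I$: for any $b_{i,j}$ with $j \geq 1$, $d_{b_{i,j}}(\ini_{>_1}(\bar I))$ is bounded above by the maximum of $d_{b_{i,j}}$ over the $R_1$-factors of the original generators, which is at most $d_{b_{i,j}}(I) \leq 1$ by the hypothesis inherited from Theorem~\ref{main}. For condition (2), suppose $b_{i,0}$ divides a minimal generator $N = M \cdot \ini(g)$ of $\ini_{>_1}(\bar I)$. Since $b_{i,0} \in R_1$ and $\ini(g) \in R_2$, $b_{i,0}$ must divide $M$; because each $h_k$ contributes only the trivial $R_1$-factor, $b_{i,0}$ is forced to divide the $B$-part of some monomial generator $M''$ of $I$, so $b_{i,0} \mid M''$. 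Condition (2) for $I$ then supplies an index $j \geq 1$ with $b_{i,j} \mid M''$; since $b_{i,j} \in R_1$, it divides the $R_1$-factor of $M''$, hence divides $M$, and hence divides $N$. With both conditions verified, Theorem~\ref{main} applied to $\bar I$ will yield the desired conclusion.
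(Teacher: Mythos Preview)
Your proposal is correct and follows essentially the same approach as the paper: both verify conditions (1) and (2) of Theorem~\ref{main} for the enlarged ideal by exploiting the $(R_1,R_2)$-factorability framework together with Proposition~\ref{GrobnerArgument} and Corollary~\ref{degrees}. The only cosmetic difference is the choice of splitting---the paper takes $R_2 = k[Y]$ and $R_1$ to be generated by all remaining variables, whereas you take $R_1 = k[B]$ and $R_2$ generated by the rest---but since $B \cap Y = \emptyset$, either split makes $\bar I$ factorable and the argument proceeds identically.
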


\begin{proof}
We prove the statement by applying Theorem~\ref{main} to the ideal $H=\ini(I,h_1,\ldots, h_{\ell})$. Let  $K=(I,h_1, \ldots, h_{\ell})$ and notice that $K$ is an $(R_1, R_2)$-factorable  ideal, where $R_1=k[x_1, \ldots , x_u]$, $R_2=k[y_1, \ldots , y_r]$, and $B \subseteq \left\{x_1, \ldots, x_u\right\}$. Then $d_{b_{i,j}}(H) \leq d_{b_{i,j}}(I) \leq 1$ for all $i \geq 1$ and $j \geq 1$ since $B \cap Y =\emptyset$, by Corollary~\ref{degrees}. Hence, condition (1) of Theorem~\ref{main} is satisfied.

To see that condition (2) is satisfied, let $N$ be a monomial generator of $H$ such that $b_{i,0} \mid N$ for some $i$. Then $N=\lcm(M_{i_1}, \ldots, M_{i_e})g$, where $\{M_{i_1}, \ldots, M_{i_e}\}$ is a subcollection of $R_1$-factors of the minimal generators of $K$ (as in Proposition~\ref{GrobnerArgument}) and $g \in R_2$. Since $b_{i,0} \not\in \{y_1, \ldots, y_r\}$, we have that $b_{i,0} \mid \lcm(M_{i_1}, \ldots, M_{i_e})$. Moreover, since $h_i \in R_2$, then  $b_{i,0} \mid M_{i_v}$ for some $M_{i_v}$ that is a factor of a generator of $I$. By Proposition~\ref{GrobnerArgument}, we may assume that  $N_{i_v}=M_{i_{v}}g_{i_{v}}$ is the corresponding monomial generator of $I$ with $g_{i_{v}} \in R_2$. Hence, by our assumptions on  $f_1, \ldots, f_q$, there must exist $j > 0$ such that $b_{i,j} \mid N_{i_v}$. Therefore, $b_{i,j} \mid M_{i_{v}}$ since $B \cap Y =\emptyset$. Hence, condition (2) of Theorem~\ref{main} is satisfied, and the conclusion now follows.
\end{proof}

If in addition to the assumptions of Proposition~\ref{append a seq} we assume that the sequence $h_1, \ldots, h_{\ell}$ is a regular sequence on $R/I$ then we can get a better bound on the depth.

\begin{corollary}\label{append reg seq}
Let $I$, $B$, and $Y$ be as in Proposition~\ref{append a seq}. Suppose further that $h_1, \ldots, h_{\ell}$ is a regular sequence on $R/I$. Then $\depth R/I \geq \ell+q$.
\end{corollary}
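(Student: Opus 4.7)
The plan is to combine Proposition~\ref{append a seq} with the standard fact that killing a regular sequence drops the depth by exactly the length of that sequence. First, I would apply Proposition~\ref{append a seq} to conclude that $f_1, \ldots, f_q$ is an initially regular sequence on $R/(I, h_1, \ldots, h_\ell)$. Invoking Proposition~\ref{ini reg bound} then immediately yields
\[
\depth R/(I, h_1, \ldots, h_\ell) \;\geq\; q.
\]

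Next, I would use the hypothesis that $h_1, \ldots, h_\ell$ is a regular sequence on $R/I$. A standard result in commutative algebra (see, e.g., \cite{BH}) states that if $h_1, \ldots, h_\ell$ is an $M$-regular sequence for a finitely generated module $M$, then
\[
\depth M/(h_1, \ldots, h_\ell)M \;=\; \depth M - \ell.
\]
Applying this with $M = R/I$ gives
\[
\depth R/(I, h_1, \ldots, h_\ell) \;=\; \depth R/I - \ell.
\]

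Combining the two displays,
\[
\depth R/I \;=\; \depth R/(I, h_1, \ldots, h_\ell) + \ell \;\geq\; q + \ell,
\]
which is the desired bound. There is no real obstacle: the work has already been done in Proposition~\ref{append a seq} (which guarantees the initially regular sequence survives after adjoining $h_1, \ldots, h_\ell$) and in Proposition~\ref{ini reg bound} (which turns initially regular sequences into depth bounds). The only subtlety worth checking is that we are allowed to apply the depth-drop formula, and this is immediate from the assumption that the $h_i$ form a genuine regular sequence on $R/I$.
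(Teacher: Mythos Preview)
Your proposal is correct and follows essentially the same approach as the paper: apply Proposition~\ref{append a seq} and Proposition~\ref{ini reg bound} to obtain $\depth R/(I,h_1,\ldots,h_\ell)\geq q$, then use the standard depth-drop formula for regular sequences to conclude $\depth R/I = \depth R/(I,h_1,\ldots,h_\ell)+\ell \geq q+\ell$. The paper's proof is just a terser version of what you wrote.
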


\begin{proof}
By Proposition~\ref{ini reg bound} and Proposition~\ref{append a seq}, $\depth R/(I, h_1, \ldots, h_{\ell}) \geq q$. Moreover, since $h_1, \ldots, h_{\ell}$ is a regular sequence on $R/I$, then $\depth R/I=\depth R/(I, h_1, \ldots, h_{\ell}) +\ell$.
\end{proof}

The next corollary gives another way to obtain a longer initially regular sequence, provided that the first part consists of an initially regular sequence of elements that are sums of two variables. Notice that we do not require that all these variables need to be distinct.

\begin{corollary}\label{extend ini regular with sums of two}
Let $I$, $B$, and $Y$ be as in Proposition~\ref{append a seq}. Suppose further that $h_1, \ldots, h_{\ell}$ is an initially regular sequence with each $h_i$ of the form $y_{i_1}+y_{i_2}$, for some $y_{i_1}, y_{i_2}$ distinct variables. Then $h_1, \ldots, h_{\ell}, f_1, \ldots, f_q$ is an initially regular sequence on $R/I$.
\end{corollary}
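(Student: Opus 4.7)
The plan is to reduce the corollary to Theorem~\ref{main} applied to the monomial ideal obtained after processing the $h$'s. Let $I_1 = \ini_{>_1}(I)$ and, recursively, set $I_i = \ini_{>_i}(I_{i-1}, g_{i-1})$ for $i \geq 2$, where $g_1, \ldots, g_{\ell+q}$ is the concatenated sequence $h_1, \ldots, h_\ell, f_1, \ldots, f_q$. The goal is to verify that $g_i$ is regular on $R/I_i$ for each $i$. For $i \leq \ell$, this is the hypothesis that $h_1, \ldots, h_\ell$ is an initially regular sequence on $R/I$. Hence all the work lies in showing that $f_1, \ldots, f_q$ is an initially regular sequence on $R/I_{\ell+1}$, and for this I would apply Theorem~\ref{main} to the monomial ideal $I_{\ell+1}$ using the variable family $B$.

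The key claim to prove is that the family $B$ still satisfies the two hypotheses of Theorem~\ref{main} with respect to $I_{\ell+1}$. Here I would exploit the fact that $B \cap Y = \emptyset$ and that each $h_i = y_{i_1}+y_{i_2}$ involves only $Y$-variables. By iterated application of Lemma~\ref{gens H} (whose conditions on the chosen term orders are automatically compatible with the initially regular hypothesis on the $h_i$), each step $\ini_{>_i}(I_{i-1}, h_i)$ either adjoins a variable from $Y$ as a new generator, or replaces powers of $y_{i_1}$ in a monomial generator by an appropriate power of $y_{i_2}$. In particular, none of these operations introduces a variable of $B$ into a generator where it was not already present, nor increases its degree. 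Consequently, $d_{b_{i,j}}(I_{\ell+1}) \leq d_{b_{i,j}}(I) \leq 1$ for $j \geq 1$, verifying condition~(1). For condition~(2), any minimal generator $N$ of $I_{\ell+1}$ divisible by $b_{i,0}$ cannot be one of the adjoined $Y$-variables, so it descends from some minimal generator $M$ of $I$; since only $Y$-substitutions have occurred and $b_{i,0} \notin Y$, the variable $b_{i,0}$ divides $M$, hence by condition~(2) for $I$ some $b_{i,j}$ with $j \geq 1$ divides $M$, and this $b_{i,j}$ persists in $N$ because it lies outside $Y$.

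With both conditions confirmed for $I_{\ell+1}$, Theorem~\ref{main} yields that $f_1, \ldots, f_q$ is an initially regular sequence on $R/I_{\ell+1}$, which is exactly the remaining requirement to conclude that the concatenated sequence $h_1, \ldots, h_\ell, f_1, \ldots, f_q$ is initially regular on $R/I$.

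The main obstacle I anticipate is the careful bookkeeping of successive initial ideals, especially when different term orders are permitted at each step of the $h$-phase. This is handled by the $(R_1, R_2)$-factorable framework from Section~\ref{sec.construction}: Proposition~\ref{GrobnerArgument} guarantees that a Gr\"obner basis of $(I_{i-1}, h_i)$ can be obtained whose $R_1$-monomial parts are least common multiples of those appearing in $I_{i-1}$, while Lemma~\ref{gens H} pins down the precise effect of adjoining a single binomial $y_{i_1}+y_{i_2}$. Together these ensure the monomial structure relative to $B$ is preserved throughout, which is exactly what is needed to feed the output of the $h$-phase into Theorem~\ref{main}.
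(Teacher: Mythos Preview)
Your argument is correct and runs parallel to the paper's proof, though you unpack the machinery more explicitly. The paper's proof is a two-line citation: Proposition~\ref{append a seq} gives that $f_1,\ldots,f_q$ is an initially regular sequence on $R/(I,h_1,\ldots,h_\ell)$, and Lemma~\ref{lem.initial} identifies $\ini(I,h_1,\ldots,h_\ell)$ with the iterated initial ideal $I_{\ell+1}$, so the two sequences concatenate.

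What you do differently is bypass Lemma~\ref{lem.initial} entirely: you work directly with the iterated initial ideal $I_{\ell+1}$, use repeated applications of Lemma~\ref{gens H} to describe its generators (substitution of $Y$-variables plus adjoined $Y$-variables), and then verify the hypotheses of Theorem~\ref{main} for $I_{\ell+1}$ by hand. This is essentially re-proving the relevant special case of Proposition~\ref{append a seq} for $I_{\ell+1}$ rather than for $(I,h_1,\ldots,h_\ell)$. The advantage of your route is that it avoids the single-term-order constraint implicit in Lemma~\ref{lem.initial} (which requires $x_i>x_j$ for $i<j$ under one fixed order); your step-by-step application of Lemma~\ref{gens H} only needs the leading variable of each $h_i$ to be correct under the order $>_i$ used at that step. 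The paper's route is shorter and more modular, but leans on a compatibility of term orders that is not explicitly stated in the corollary's hypotheses. Both approaches rest on the same core observation: because $B\cap Y=\emptyset$, the $Y$-substitutions induced by the $h$-phase leave the $B$-structure of the generators untouched.
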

\begin{proof}
The result follows from Proposition~\ref{append a seq} and Lemma~\ref{lem.initial}.
\end{proof}

Our next goal is to construct sequences that are both regular and initially regular. Our construction is inspired by the notion of leaves in graphs. We say that a variable $x$ is a \emph{leaf} in a monomial ideal $I$ if there exists a unique monomial generator $M \in I$ such that $x \mid M$.

\begin{remark} By employing a change of variables if needed (see \cite[Lemmas 3.3 and 3.5]{NPV}), the depth of a monomial ideal is unchanged if we assume that $d_x(I)=1$ for any leaf $x$ of $I$.
\end{remark}

\begin{lemma}\label{leaves regular}
Let $I$ be a monomial ideal in a polynomial ring $R$.  Suppose that $x$ and $y$ are two leaves in $I$ with $M_1, M_2$ the unique monomial generators in $I$ such that $x\mid M_1$ and $y\mid M_2$. Suppose there exist monomials $z, w \in R$ such that $x \nmid z$, $z \mid M_1$, $y \nmid w, w\mid M_2$, $\gcd(z,w) = 1$, and $zw \in I$.
Then $x+y$ is a regular element on $R/I$.
\end{lemma}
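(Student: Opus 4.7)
The plan is to argue by contradiction. Suppose $f \in R$ satisfies $f(x+y) \in I$ but $f \notin I$. Since $I$ is monomial (hence homogeneous) and $x+y$ has degree one, I can decompose $f$ into homogeneous components and reduce to the case where $f$ is homogeneous. Further, since $I$ is monomial, any monomial term of $f$ that lies in $I$ can be subtracted off without disturbing either hypothesis, so I may assume no monomial in $\supp(f)$ lies in $I$, and aim to show $f = 0$.

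The first step picks a monomial $m_0 \in \supp(f)$ of maximum $x$-degree $d^* := d_x(m_0)$. In the expansion of $f(x+y)$, the monomial $m_0 x$ can only be cancelled by a contribution from some $m' \in \supp(f)$ with $m' y = m_0 x$, i.e., $m' = m_0 x / y$; but then $d_x(m') = d^* + 1$, violating maximality. So the coefficient of $m_0 x$ in $f(x+y)$ is $c_{m_0} \neq 0$, forcing $m_0 x \in I$. Because $x$ is a leaf, any generator of $I$ containing $x$ must be $M_1$; if none did, then $m_0 \in I$, contradicting the reduction. So $M_1 \mid m_0 x$, giving $M_1/x \mid m_0$ and hence $z \mid m_0$ (since $z \mid M_1$ and $x \nmid z$).

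The main step is an induction constructing the chain $m^{(k)} := m_0\, y^k/x^k$ for $k = 0, 1, \ldots, d^*$, and showing at each stage that $m^{(k)} \in \supp(f) \setminus I$, that $z \mid m^{(k)}$, and that $c_{m^{(k)}} = (-1)^k c_{m_0}$. At stage $k < d^*$, I examine the coefficient of $m^{(k)} y$ in $f(x+y)$: if $m^{(k)} y$ were in $I$, then by leaf-ness of $y$ the only generator of $I$ with $y$ is $M_2$, so $M_2 \mid m^{(k)} y$, giving $M_2/y \mid m^{(k)}$ and hence $w \mid m^{(k)}$; combined with $z \mid m^{(k)}$, $\gcd(z,w) = 1$, and $zw \in I$, this would force $m^{(k)} \in I$, a contradiction. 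So $m^{(k)} y \notin I$ and its coefficient in $f(x+y)$ must vanish. Since the only other contribution comes from $m^{(k+1)} x = m^{(k)} y$, this yields $c_{m^{(k+1)}} = -c_{m^{(k)}} \neq 0$, placing $m^{(k+1)}$ in $\supp(f)$; divisibility by $z$ propagates because $x \nmid z$, and $m^{(k+1)} \notin I$ by the reduction.

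The critical step — and where the coprimality and the condition $zw \in I$ combine decisively — is the terminal case $k = d^*$. Then $m_* := m^{(d^*)}$ has $d_x(m_*) = 0$, so the coefficient of $m_* y$ in $f(x+y)$ receives no contribution from any $m'' x$ (such $m''$ would need $x \mid m_* y$), and therefore equals $c_{m_*} \neq 0$. This forces $m_* y \in I$; arguing as in the inductive step, $w \mid m_*$, so $zw \mid m_*$, and $zw \in I$ gives $m_* \in I$, contradicting $m_* \in \supp(f) \setminus I$. This final contradiction completes the proof that $x+y$ is regular on $R/I$.
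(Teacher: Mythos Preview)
Your argument is correct. The chain construction, the cancellation analysis for the coefficients of $m_0 x$, $m^{(k)} y$, and $m_* y$, and the use of the leaf hypothesis together with $\gcd(z,w)=1$ and $zw\in I$ all go through as written. The proof even handles the case $M_1=M_2$ uniformly, which the paper treats separately.

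The approach, however, is genuinely different from the paper's. The paper first invokes Lemma~\ref{r=1 case} (together with Remark~\ref{degree 1 relaxed}) to conclude that the colon ideal $I:(x+y)$ is monomial, so one may assume the multiplier $g$ is a single monomial. After this reduction the proof is a two-line computation: $gx\in I$ forces $z\mid g$, $gy\in I$ forces $w\mid g$, and coprimality yields $zw\mid g\in I$. Your argument avoids this reduction entirely and instead carries out a direct, self-contained cancellation analysis on the support of a general $f$. What you gain is independence from the earlier machinery: your proof stands on its own and in effect re-establishes the monomial-colon phenomenon for this particular $h=x+y$ via the descending $x$-degree chain $m^{(k)}=m_0\,y^k/x^k$. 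What the paper's route buys is brevity and modularity---once Lemma~\ref{r=1 case} is in hand, the leaf-pair argument is almost immediate.
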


\begin{proof}
First notice that if $M_1=M_2$, then $x+y$ is a regular element on $R/I$, by Lemma~\ref{regular} and Remark~\ref{degree 1 relaxed}. Hence, we may assume that $M_1 \neq M_2$.

Suppose that $g(x+y) \in I$, for some $g \in R$. Then we may assume that $g$ is a monomial by Lemma~\ref{r=1 case} and Remark~\ref{degree 1 relaxed} .
Since $I$ is a monomial ideal, then $gx \in I$ and $gy \in I$. Thus, if $g \not\in I$, then $M_1\mid gx$, since $x$ appears only in $M_1$, and similarly $M_2\mid gy$. Thus, $gx=xzM_1'$ for some monomial $M_1'$ and therefore $g=zM_1'$. Similarly, $g=wM_2'$, for some monomial $M_2'$. Hence, $zw \mid g$ and therefore $g \in I$, since $\gcd(z,w)=1$ and  $zw \in I$.
\end{proof}

Notice that if $I$ is the edge ideal of a graph, unless $xy$ is an isolated edge, then the conditions $\gcd(z,w)=1$ and that $zw\in I$ in Lemma~\ref{leaves regular} means that the two leaves we are considering are distance three apart. Moreover, the result does not hold in general if the distance is not three as can be seen in the next example.

\begin{example}
Let $I=(x_1x_2,x_2x_3,x_2x_4,x_4x_5,x_5x_6,x_5x_7)\subseteq R=\mathbb{Q}[x_1,\ldots ,x_7]$ be the edge ideal of the graph $G$ depicted below.

\setlength{\unitlength}{0.7cm}
\begin{tikzpicture}

    \tikzstyle{point}=[circle,thick,draw=black,fill=black,inner sep=0pt,minimum width=4pt,minimum height=4pt]

 \node (a)[point,label={[label distance=0cm]:$x_1$}] at (0.5,2) {};

    \node (b)[point,label={[label distance=0cm]3:$x_2$}] at (2,2) {};

\node (d)[point,label={[label distance=0cm]3:$x_4$}] at (3.5,2) {};

 \node (e)[point,label={[label distance=0cm]3:$x_5$}] at (5,2) {};

 \node (g)[point,label={[label distance=0cm]3:$x_7$}] at (5,0.5) {};

    \node(c)[point,label={[label distance=0cm]3:$x_3$}] at (2,0.5) {};

   \node(f)[point,label={[label distance=0cm]3:$x_6$}] at (6.5,2) {};

\draw (a.center) -- (b.center);

\draw(b.center)--(d.center);
\draw(d.center)--(e.center);
\draw(b.center)--(c.center);
\draw (e.center) -- (f.center);
\draw (e.center) -- (g.center);
\end{tikzpicture}

Notice that $x_1, x_3,x_6$, and $x_7$ are all leaves in $I$, no two of which are distance three apart. It can be checked that no sum of any two of these leaves is a regular element.
\end{example}

\begin{remark}\label{leaf pairs further assumptions}
Under the assumptions of Lemma~\ref{leaves regular} we may assume as in the proof that $M_1 \neq M_2$. Moreover, we may assume that $zw$ is a minimal generator of $I$. Indeed,
since $zw \in I$, then $zw=MN$, where $M$ is a monomial generator of $I$ and $N \in R$ is another monomial. Let $z=z'z''$ and $w=w'w''$, with $z'\mid M$, $w'\mid M$, and $\gcd(z'', M)=\gcd(w'',M)=1$. Since $\gcd(z,w)=1$, then $\gcd(z',w')=1$, and therefore $M=z'w'$.

Finally, since $x \nmid z$, then $x \nmid z'$ and similarly, $y\nmid w'$. Also, since $z \mid M_1$, then $z' \mid M_1$ and similarly, $w' \mid M_2$. Therefore, we may replace $z$ and $w$ by $z'$ and $w'$, respectively and assume that $zw$ is indeed a minimal monomial generator of $I$.
\end{remark}

\begin{definition}\label{leaf pair}
An ordered pair of leaves $x,y$ of a monomial ideal $I$ which satisfies the conditions of Lemma~\ref{leaves regular} with $M_1 \neq M_2$ is called a \emph{leaf pair}. We will say that two leaf pairs $x,y$ and $a, b$ are \emph{disjoint} if $\{x,y\} \cap \{a,b\}=\emptyset$.
\end{definition}

We are now ready to show that disjoint leaf pairs can be used to form an initially regular sequence. Using Theorem~\ref{regular equiv ini regular}, we see that the sequence is also a regular sequence.

\begin{theorem}\label{leaf pairs regular}
Let $I$ be a monomial ideal in a polynomial ring $R$. Let $\{x_i,y_i\}_{i=1}^{\ell}$ be a set of disjoint leaf pairs with respect to $I$. Then $x_1+y_1, \ldots, x_{\ell}+y_{\ell}$ is both a regular sequence and an initially regular sequence on $R/I$ with respect to any term order such that $x_i > y_i$ for all $i$.
\end{theorem}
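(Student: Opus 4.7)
The plan is to prove by induction on $\ell$ that $x_1+y_1,\ldots,x_\ell+y_\ell$ is an \emph{initially regular} sequence on $R/I$ and then invoke Corollary~\ref{regular sums of two} to upgrade the conclusion to a regular sequence. After renumbering (the leaf pair hypothesis is symmetric in the indices), I may assume the fixed term order also satisfies $x_1>x_2>\cdots>x_\ell$, so that both Lemma~\ref{lem.initial} and Corollary~\ref{regular sums of two} apply. Using the Remark preceding Lemma~\ref{leaves regular}, I may additionally assume $d_{x_i}(I)=d_{y_i}(I)=1$ for every $i$. The base case $\ell=1$ is exactly Lemma~\ref{leaves regular}.

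For the inductive step, let $J_\ell:=\ini(I,x_1+y_1,\ldots,x_{\ell-1}+y_{\ell-1})$. By the induction hypothesis it is enough to show that $x_\ell+y_\ell$ is regular on $R/J_\ell$. Lemma~\ref{lem.initial} together with iterated Lemma~\ref{gens H} gives the explicit description
\[
J_\ell=\bigl(x_1,\ldots,x_{\ell-1},\ \phi(M)\ :\ M\text{ a minimal generator of }I\bigr),
\]
where $\phi\colon R\to R$ substitutes $x_i\mapsto y_i$ for $i<\ell$ and is the identity on all other variables. Suppose $g(x_\ell+y_\ell)\in J_\ell$. By Lemma~\ref{r=1 case} (disjointness of the leaf pairs guarantees $d_{y_\ell}(J_\ell)\leq 1$) I may assume $g$ is a monomial. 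If $x_j\mid g$ for some $j<\ell$, then $g\in J_\ell$ immediately, so assume not. Then neither $gx_\ell$ nor $gy_\ell$ can be made to lie in $J_\ell$ via the generators $x_1,\ldots,x_{\ell-1}$, so $gx_\ell$ is divisible by some $\phi(M)$ with $x_\ell\mid\phi(M)$; since $x_\ell$ is a leaf this forces $M=M_\ell^1$ and yields $\phi(M_\ell^1)/x_\ell\mid g$. Symmetrically, $\phi(M_\ell^2)/y_\ell\mid g$.

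The main obstacle is to combine these two divisibilities. Pick leaf pair witnesses $z,w$ for $(x_\ell,y_\ell)$ chosen, via Remark~\ref{leaf pair further assumptions}, so that $zw$ is a minimal monomial generator of $I$. Because $x_\ell\nmid z$ and $d_{x_\ell}(M_\ell^1)=1$, we have $z\mid M_\ell^1/x_\ell$ and hence $\phi(z)\mid\phi(M_\ell^1)/x_\ell\mid g$; similarly $\phi(w)\mid g$. The crux is to show $\gcd(\phi(z),\phi(w))=1$. A variable common to $\phi(z)$ and $\phi(w)$ not of the form $y_i$ with $i<\ell$ would already divide $\gcd(z,w)=1$, which is impossible. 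On the other hand, $y_i\mid\gcd(\phi(z),\phi(w))$ for some $i<\ell$ would require $\{x_i,y_i\}$ to meet $z$ and $w$ on opposite sides (since $\gcd(z,w)=1$ forbids a shared variable), hence $x_iy_i\mid zw$. But then the minimal generator $zw$ of $I$ is divisible by the leaf $x_i$ (forcing $zw=M_i^1$) and by the leaf $y_i$ (forcing $zw=M_i^2$), contradicting the fact that $(x_i,y_i)$ is a leaf pair and therefore $M_i^1\neq M_i^2$.

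With $\gcd(\phi(z),\phi(w))=1$ established, $\phi(zw)=\phi(z)\phi(w)\mid g$, and since $zw$ is a generator of $I$ we have $\phi(zw)\in J_\ell$, whence $g\in J_\ell$. This proves $x_\ell+y_\ell$ is regular on $R/J_\ell$ and closes the induction that the sequence is initially regular. Finally, the hypotheses of Corollary~\ref{regular sums of two} are met (take the last element $x_\ell+y_\ell$ in the role of $c_1+c_2$, with all other $x_i,y_i$ distinct from $x_\ell,y_\ell$ by disjointness), so initial regularity and regularity are equivalent for this sequence, giving both conclusions simultaneously.
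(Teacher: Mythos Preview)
Your argument is essentially correct and follows the same core strategy as the paper: induct on $\ell$, describe the ideal after the substitutions $x_i\mapsto y_i$, and verify that the leaf-pair witnesses $z,w$ still satisfy $\gcd(\phi(z),\phi(w))=1$ by exploiting that $zw$ is a minimal generator together with $M_i^1\neq M_i^2$. The paper organizes the induction differently---it shows that a \emph{single} substitution $\phi$ carries the remaining $\ell-1$ leaf pairs to leaf pairs of $I'$ and then applies Lemma~\ref{leaves regular} again---whereas you collapse all $\ell-1$ substitutions at once via Lemma~\ref{lem.initial} and verify regularity of $x_\ell+y_\ell$ directly on $J_\ell$; the key computation is the same in both.

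One small flaw: your reduction ``I may additionally assume $d_{x_i}(I)=d_{y_i}(I)=1$'' via the Remark preceding Lemma~\ref{leaves regular} is not justified---that Remark only asserts that \emph{depth} is preserved under the change of variables, not that the specific sequence $x_1+y_1,\ldots,x_\ell+y_\ell$ remains regular. Fortunately the reduction is unnecessary: the divisibility $z\mid M_\ell^1/x_\ell$ follows from $x_\ell\nmid z$ alone (regardless of $d_{x_\ell}(M_\ell^1)$), and your appeal to Lemma~\ref{r=1 case} goes through via Remark~\ref{degree 1 relaxed} since $y_\ell$ is a leaf, so its degree in every generator of $J_\ell$ is either $0$ or the fixed constant $d_{y_\ell}(M_\ell^2)$. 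Simply drop the reduction and cite Remark~\ref{degree 1 relaxed} where you invoke Lemma~\ref{r=1 case}.
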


\begin{proof} We start with the case where $\ell = 2$. For ease of notation, let $x,y$ and $a,b$ denote the given two leaf pairs (with $x > y$ and $a > b$). Let $M_1, M_2, N_1$, and $N_2$ be the monomial generators of $I$ that are divisible by $x,y,a$, and $b$, respectively.

By the definition of a leaf pair and Lemma~\ref{leaves regular}, $x+y$ is regular on $R/I$. Since $I$ is monomial, $x+y$ is also initially regular.
It remains to show that $a+b$ is regular on $R/(I,x+y)$ and on $R/\ini(I,x+y)$. By Theorem~\ref{regular equiv ini regular} it is enough to show that $a+b$ is regular on $R/\ini(I, x+y)$. As in Theorem~\ref{regular equiv ini regular}, $R/\ini(I,x+y) \cong R'/I'$, where $R=k[x,y,a,b,x_5, \ldots , x_n]$, $R'=k[y,a,b,x_5, \ldots, x_n]$ and $I'=\langle \ini(I, x+y)\setminus x \rangle= \langle I\setminus \{M_1\} \cup \{\widehat{M_1}\} \rangle$,  where $\widehat{M_1}={\frac{-y^{d_x(M_1)}}{x^{d_x(M_1)}}}M_1$. The isomorphism is induced by the map $\phi: R \rightarrow R'$ that sends $x$ to $-y$ and fixes all other variables. In light of Lemma~\ref{leaves regular}, it suffices to show that $\{\phi(a),\phi(b)\}$ is a leaf pair with respect to the ideal $I'$ in $R'$.

By the definition of a leaf pair, $a \mid N_1$, $b\mid N_2$, and $N_1 \not=N_2$. In addition, since $b$ is a leaf, then $b \nmid N_1$. Since $\phi(b)=b$, we have $\phi(b) \mid \phi(N_2)$ but $\phi(b) \nmid \phi(N_1)$, so $\phi(N_1) \not= \phi(N_2)$. Also, there exist monomials $\alpha \vert N_1$ and $\beta \vert N_2$ with $a\nmid \alpha$ and $b \nmid \beta$ with $\alpha \beta \in I$ and $\gcd(\alpha, \beta)=1$.
Notice that since $\alpha \mid N_1$ and $\beta \mid N_2$, we have $\phi(\alpha) \mid \phi(N_1)$ and $\phi(\beta) \mid \phi(N_2)$.  Since $\alpha \beta \in I$ and, as in the proof of Theorem~\ref{regular equiv ini regular}, $\phi(I) \subseteq I'$, we have $\phi(\alpha)\phi(\beta) \in I'$.  Since $a\neq x$, we have $\phi(a)=a$. Observe that if $\phi(a) \mid \phi(\alpha)$ then we must have $\phi(\alpha) \not=\alpha$, so $a ~\big|~ \frac{y^{d_x(\alpha)}}{x^{d_x(\alpha)}}\alpha$. This implies that $a \mid \alpha$,  since $a \neq y$,  which is a contradiction. Thus, $\phi(a) \nmid \phi(\alpha)$. Similarly, $\phi(b) \nmid \phi(\beta)$.

It remains to show that $\gcd(\phi(\alpha), \phi(\beta))=1$.
If $x \nmid \alpha$ and $x \nmid \beta$, then $\phi(\alpha)=\alpha$ and $\phi(\beta)=\beta$ and therefore $\gcd(\phi(\alpha), \phi(\beta))=1$.  Otherwise, since $\gcd(\alpha, \beta)=1$, $x$ can divide at most one of $\alpha$ and $\beta$. Without loss of generality, suppose that $x \mid \alpha$. Then, $\phi(\alpha)=\frac{-y^{d_x(\alpha)}}{x^{d_x(\alpha)}}\alpha$ and $\phi(\beta)=\beta$.  By Remark~\ref{leaf pairs further assumptions}, we may assume that $\alpha \beta $ is a minimal generator of $I$, and therefore at most one of $x$ or $y$ divides $\alpha \beta$. Since we have assumed $x \mid \alpha$, it follows that $x, y \nmid \beta$. Therefore, $\gcd(\phi(\alpha), \phi(\beta))=\gcd(\frac{-y^{d_x(\alpha)}}{x^{d_x(\alpha)}}\alpha, \beta)=\gcd(-y^{d_x(\alpha)}\alpha, \beta)=1$.

To see the general case, for any $\ell \ge 2$, note that $\{\phi(x_i), \phi(y_i)\}_{i=2}^\ell$ is a set of disjoint leaf pairs with respect to $I' \subseteq R'$, and so the conclusion follows by induction.
\end{proof}

Our next result shows that the initially regular sequences formed by Theorem~\ref{main} can be combined with leaf pairs to create longer initially regular sequences, and thus improve the depth bound.

\begin{corollary} \label{pairs of leaves}
Let $I$ be a monomial ideal in a polynomial ring $R$. Suppose that $B = \{b_{i,j} \mid 1 \leq i \leq q, 0 \leq j \leq t_i\}$ are distinct variables in $R$ satisfying the conditions of Theorem~\ref{main}, and  let $f_i = \sum_{j=0}^{t_i} b_{i,j}$ for $i = 1, \ldots, q$. Suppose that there exist a sequence of disjoint pairs of leaves $\{x_k, y_k\}_{k=1}^{\ell}$ in $I$ such that each pair $x_k, y_k$ satisfies the conditions of Lemma~\ref{leaves regular}. Assume further that $B \cap \{x_1, y_1, \ldots, x_{\ell}, y_{\ell}\}=\emptyset$. Then $x_1+y_1, \ldots, x_{\ell}+y_{\ell}, f_1,f_2, \ldots, f_q$ is an initially regular sequence on $R/I$ with respect to any term order such that $x_k > y_k$ for all $k\leq \ell$ and $b_{i,0} > b_{i,j}$ for $i<q$ and $j \leq t_i$. Particularly, $\depth R/I \geq \ell+q$.
\end{corollary}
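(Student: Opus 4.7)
The plan is to obtain the corollary by combining two prior results: Theorem~\ref{leaf pairs regular}, which converts a collection of disjoint leaf pairs into an initially regular sequence, and Corollary~\ref{extend ini regular with sums of two}, which lets us concatenate such a sequence of sums-of-two-variables with the sequence $f_1, \ldots, f_q$ produced by Theorem~\ref{main}. Since all the preparatory work has been carried out in these two results, the proof should amount to verifying that the hypotheses of the two results line up with what is assumed here.

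First, I would apply Theorem~\ref{leaf pairs regular} to the given set $\{x_k, y_k\}_{k=1}^{\ell}$ of disjoint leaf pairs. The hypothesis that each pair satisfies the conditions of Lemma~\ref{leaves regular}, together with the term order condition $x_k > y_k$, is exactly what is needed to conclude that $x_1+y_1, \ldots, x_\ell+y_\ell$ is an initially regular sequence on $R/I$.

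Next, I would set $Y = \{x_1, y_1, \ldots, x_\ell, y_\ell\}$ and invoke Corollary~\ref{extend ini regular with sums of two} with $h_k = x_k + y_k$. The disjointness assumption $B \cap Y = \emptyset$ in our corollary supplies exactly the hypothesis of Proposition~\ref{append a seq} that $B$ and $Y$ are disjoint. The fact that each $h_k$ is a sum of two distinct variables matches the form required by Corollary~\ref{extend ini regular with sums of two}, and the preceding step ensures the prerequisite that the $h_k$ themselves form an initially regular sequence. Combining with the hypothesis that $B$ and the $f_i$ satisfy the conditions of Theorem~\ref{main}, Corollary~\ref{extend ini regular with sums of two} yields that
\[
x_1+y_1, \ldots, x_\ell+y_\ell, f_1, \ldots, f_q
\]
is an initially regular sequence on $R/I$ with respect to the specified term order. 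The depth bound $\depth R/I \geq \ell + q$ then follows immediately from Proposition~\ref{ini reg bound}.

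I do not anticipate a genuine obstacle here, since the real work is already encapsulated in Theorem~\ref{leaf pairs regular} and Corollary~\ref{extend ini regular with sums of two}; the only points requiring care are (i) confirming that the term order conditions imposed in the two input results are simultaneously compatible with the hypothesis of our corollary (they are, because $B$ and the leaf pair variables are disjoint, so the order constraints $x_k > y_k$ and $b_{i,0} > b_{i,j}$ involve disjoint sets of variables), and (ii) checking that the $h_k$ have the form explicitly required by Corollary~\ref{extend ini regular with sums of two}, which is immediate.
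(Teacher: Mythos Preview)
Your proposal is correct and essentially identical to the paper's proof. The paper invokes Theorem~\ref{leaf pairs regular}, then Proposition~\ref{append a seq} together with Lemma~\ref{lem.initial}; you instead cite Corollary~\ref{extend ini regular with sums of two}, whose proof is precisely ``the result follows from Proposition~\ref{append a seq} and Lemma~\ref{lem.initial}'', so the two arguments differ only in packaging.
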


\begin{proof} By Theorem~\ref{leaf pairs regular}, $x_1+y_1, \ldots, x_\ell+y_\ell$ forms an initially regular sequence on $R/I$. By Proposition~\ref{append a seq}, $f_1, \ldots, f_q$ is an initially regular sequence on $R/(I,x_1+y_1, \ldots, x_\ell+y_\ell)$. Now, by Lemma~\ref{lem.initial}, we have
$$\ini(I,x_1+y_1, \ldots, x_\ell + y_\ell) = \ini(\ini(\ldots \ini(\ini(I,x_1+y_1),x_2+y_2), \ldots), x_\ell+y_\ell).$$
Thus, we can concatenate $x_1+y_1, \ldots, x_\ell+y_\ell$ and $f_1, \ldots, f_q$ to get an initially regular sequence on $R/I$. The last claim follows from Proposition~\ref{ini reg bound}.
\end{proof}

The following examples illustrate Corollary~\ref{pairs of leaves} in the special case of edge ideals of graphs. For graphs, our bounds are similar to known bounds, but our results give a regular sequence or an approximation of one that achieves the bound. For hypergraphs in general, our results are significantly better than known results, and still produce a regular sequence or an approximation of one.

\begin{example} \label{H tree}
Let $R=\mathbb{Q}[a,b,c,d,e,f]$, let $I=(ab,bc,be,de,ef)$ be the edge ideal of the graph depicted below, and fix a term order with $a>b>c>d>e>f$.

\setlength{\unitlength}{0.7cm}
\begin{tikzpicture}

    \tikzstyle{point}=[circle,thick,draw=black,fill=black,inner sep=0pt,minimum width=4pt,minimum height=4pt]

 \node (a)[point,label={[label distance=0cm]3:$a$}] at (0.5,2) {};

    \node (b)[point,label={[label distance=0cm]3:$b$}] at (2,2) {};

\node (c)[point,label={[label distance=0cm]3:$c$}] at (3.5,2) {};

    \node (d)[point,label={[label distance=0cm]3:$d$}] at (0.5,0.5) {};

    \node(e)[point,label={[label distance=0cm]3:${e}$}] at (2,0.5) {};

   \node(f)[point,label={[label distance=0cm]3:$f$}] at (3.5,0.5) {};

    \draw (b.center) -- (e.center);

\draw (a.center) -- (b.center);
\draw(d.center)--(e.center);
\draw(b.center)--(c.center);
\draw(e.center)--(f.center);

\end{tikzpicture}

Notice that $a+b, d+e$ is an initially regular sequence on $R/I$ by Theorem~\ref{main}. Also, $c, f$ is a leaf pair in the sense of Definition~\ref{leaf pair}. Hence, by Corollaries~\ref{regular sums of two} and \ref{pairs of leaves}, we have that $c+f, a+b, d+e$ is both a regular and an initially regular sequence on $R/I$. Therefore, $\depth R/I \geq 3$. By \cite[Theorem~1.1]{KT} we have that $\depth R/I=6-{\rm{bight}} I=3$. The known bound in \cite[Corollary~4.2]{DS} gives  $\depth R/I \geq 2$. In this example, Corollary~\ref{pairs of leaves} provides an optimal construction in the sense that it produces a maximal regular and initially regular sequence on $R/I$.

\end{example}

Note that in Example~\ref{H tree} we have a tree and, in this case, $\depth R/I$ is determined by \cite[Theorem~1.1]{KT}. The next example is a slight modification of the previous one.

\begin{example} \label{pair leaves example}
Let $R=\mathbb{Q}[a,b,c,d,e,f,g]$, let $I=(ab,ae,bc,be,de,ef,bg)$ be the edge ideal of the graph depicted below, and fix a term order with $a>b>c>d>e>f>g$.

\setlength{\unitlength}{0.7cm}
\begin{tikzpicture}

    \tikzstyle{point}=[circle,thick,draw=black,fill=black,inner sep=0pt,minimum width=4pt,minimum height=4pt]

 \node (a)[point,label={[label distance=0cm]3:$a$}] at (0.5,2) {};

    \node (b)[point,label={[label distance=0cm]3:$b$}] at (2,2) {};

\node (c)[point,label={[label distance=0cm]3:$c$}] at (3.5,2) {};

    \node (d)[point,label={[label distance=0cm]3:$d$}] at (0.5,0.5) {};

    \node(e)[point,label={[label distance=0cm]3:${e}$}] at (2,0.5) {};

   \node(f)[point,label={[label distance=0cm]3:$f$}] at (3.5,0.5) {};

\node (g)[point,label={[label distance=0cm]1:$g$}] at (2,3.5) {};

    \draw (b.center) -- (e.center);

\draw (a.center) -- (b.center);
\draw(d.center)--(e.center);
\draw(b.center)--(c.center);
\draw(e.center)--(f.center);
\draw(b.center)--(g.center);
\draw(a.center)--(e.center);
\end{tikzpicture}

By \cite[Corollary~4.2]{DS}, $\depth R/I \geq 2$. Also the diameter of this graph is $3$ and hence $\depth R/I \geq 2$  by \cite[Theorem~3.1]{FM}. Using Macaulay~2~\cite{M2} we have that $\depth R/I=3$. Notice that $d+g, c+f, a+b+e$ is both a regular sequence and an initially regular sequence on $R/I$ by Corollary~\ref{regular sums of two}, Theorem~\ref{main}, and Corollary~\ref{pairs of leaves}. Our results, again, provide a sharp bound for depth as well as a sequence that realizes the depth.
\end{example}

Next we exhibit a special situation where a variable can be reused in the creation of initially regular sequences.

\begin{theorem} \label{iterate repeated neighbors}
Let $I$ be a monomial ideal in a polynomial ring $R$. Suppose that $b_0, \ldots, b_{t}$ are distinct variables in $R$  and $>$ is a fixed term order such that  $b_0 > b_1 > \cdots > b_{t}$. Suppose that for some $q \le t$, the sets $\{b_{0},b_{1}\}, \{b_1, b_2\}, \ldots, \{b_{q-2}, b_{q-1}\}, \{b_{q-1}, b_q, \ldots, b_{t}\}$ satisfy the conditions $(1)$ and $(2)$ of Theorem~\ref{main}. Let $f_i = b_{i-1}+b_i$, for $1 \le i \le q-1$, and $f_q=b_{q-1}+\ldots+b_{t}$. Then $f_1, \ldots, f_q$ is both a regular and an initially regular sequence on $R/I$.

\end{theorem}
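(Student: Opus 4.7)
Since each $f_i$ for $1 \leq i \leq q-1$ is a sum of two variables $b_{i-1} + b_i$ with $b_{i-1} > b_i$, and $f_q = b_{q-1} + b_q + \cdots + b_t$ begins with the variable $b_{q-1}$ that also appears as $y_{q-1}$ in $f_{q-1}$, the sequence $f_1, \ldots, f_q$ fits precisely the format of Corollary~\ref{regular sums of two} with $x_i = b_{i-1}$, $y_i = b_i$ for $1 \leq i \leq q-1$, and with $c_1, \ldots, c_r$ identified in order with $b_{q-1}, b_q, \ldots, b_t$. The hypotheses on the term order translate to exactly $b_0 > b_1 > \cdots > b_t$. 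Hence, by Corollary~\ref{regular sums of two}, $f_1, \ldots, f_q$ is a regular sequence on $R/I$ if and only if it is an initially regular sequence on $R/I$, and it will suffice to establish the latter.

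I plan to prove the initially regular sequence statement by induction on $q$. For $q = 1$, the only set in play is the last one, $\{b_0, b_1, \ldots, b_t\}$, so Lemma~\ref{regular} immediately yields that $f_1 = b_0 + b_1 + \cdots + b_t$ is regular on $R/I$, and since $I$ is monomial this means $f_1$ is initially regular. For the inductive step with $q \geq 2$, Lemma~\ref{regular} applied to the set $\{b_0, b_1\}$ gives that $f_1 = b_0 + b_1$ is regular on $R/I$. Let $J = \ini(I, f_1)$; by Lemma~\ref{gens H}, $J$ is generated by $b_0$ together with the modifications $\widehat{M_j} := b_1^{d_{b_0}(M_j)} M_j / b_0^{d_{b_0}(M_j)}$, one for each minimal monomial generator $M_j$ of $I$. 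The key reduction is to verify that the shifted collection $\{b_1, b_2\}, \{b_2, b_3\}, \ldots, \{b_{q-2}, b_{q-1}\}, \{b_{q-1}, b_q, \ldots, b_t\}$ satisfies conditions (1) and (2) of Theorem~\ref{main} with respect to $J$; once that is done, the induction hypothesis applied to $J$ with $q$ replaced by $q-1$ produces an initially regular sequence $f_2, \ldots, f_q$ on $R/J$, and prepending $f_1$ yields the desired initially regular sequence on $R/I$.

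The degree condition is easy: for every $j \geq 2$, $b_j$ is distinct from both $b_0$ and $b_1$, so the passage from $M_j$ to $\widehat{M_j}$ leaves its $b_j$-degree unchanged, and $d_{b_j}(J) \leq d_{b_j}(I) \leq 1$. The main obstacle is the neighbor condition (2) for the set $\{b_1, b_2\}$, since substituting $b_0$ by $b_1$ can manufacture new divisors of $b_1$. Suppose $M$ is a minimal monomial generator of $J$ divisible by $b_1$; as $b_1 \neq b_0$, we must have $M = \widehat{M_j}$ for some minimal generator $M_j$ of $I$, and $b_1 \mid \widehat{M_j}$ forces either $b_0 \mid M_j$ or $b_1 \mid M_j$. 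If $b_0 \mid M_j$, then the hypothesis (2) for $\{b_0, b_1\}$ applied to $I$ supplies $b_1 \mid M_j$ as well, so in all cases $b_1 \mid M_j$; then hypothesis (2) for $\{b_1, b_2\}$ applied to $I$ produces $b_2 \mid M_j$, and since $b_2 \neq b_0, b_1$ we conclude $b_2 \mid \widehat{M_j} = M$. The remaining verifications for the sets $\{b_i, b_{i+1}\}$ with $i \geq 2$ and for the final set $\{b_{q-1}, b_q, \ldots, b_t\}$ follow the same pattern and are in fact simpler, because the relevant primary variable is disjoint from $\{b_0, b_1\}$ (the only subtlety is the boundary case $q = 2$ in the final set, which is handled by precisely the same two-step chaining just used). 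Completing the induction delivers the initially regular statement, and invoking Corollary~\ref{regular sums of two} promotes it to the full regular sequence claim.
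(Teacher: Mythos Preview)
Your proof is correct and follows essentially the same approach as the paper's: reduce the regular-sequence claim to the initially-regular one via Corollary~\ref{regular sums of two}, induct on $q$, describe $J=\ini(I,f_1)$ explicitly using Lemma~\ref{gens H}, and verify conditions (1) and (2) of Theorem~\ref{main} for the shifted collection, with the same two-step chaining argument for the set $\{b_1,b_2\}$. The only cosmetic difference is that the paper cites Corollary~\ref{degrees} for the degree bound while you argue it directly from the form of $\widehat{M_j}$.
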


\begin{proof}
By Corollary~\ref{regular sums of two} it suffices to show that  $f_1, f_2, \ldots, f_q$ is an initially regular sequence on $R/I$. We will proceed by induction. When $q=1$ the result follows from Theorem~\ref{main}.

By induction, it suffices to show that $\{b_1,b_2\}, \ldots , \{b_{q-1} ,\ldots , b_t\}$ satisfy the conditions $(1)$ and $(2)$ of Theorem~\ref{main} applied to  $H=\ini(I, f_1)=\ini(I, b_0+b_1)$. By Corollary~\ref{degrees}, $d_{b_i}(H) \leq 1$ for $i \geq 2$, so condition $(1)$ of Theorem~\ref{main} holds for the sets $\{b_1,b_2\}, \ldots, \{b_{q-1}, \ldots , b_t\}$ relative to $H$.

Let $I=(M_1, \ldots , M_p)$, where $M_1, \ldots, M_p$ is a minimal set of monomial generators of $I$ such that  $b_0 \mid M_i$ if and only if $1\leq i \leq \ell$. By Lemma~\ref{gens H}, $H=\left(b_0, \widehat{M_1}, \ldots , \widehat{M_{\ell}}, M_{\ell +1}, \ldots , M_p \right)$, where $\widehat{M_i}=b_1^{d_{b_0}(M_i)} \frac{M}{b_0^{d_{b_0}(M)}}$.

By the definition of $\widehat{M}$, for $j \not= 0,1$, $b_j\mid M$ if an only if $b_j \mid \widehat{M}$. Thus, condition $(2)$ of Theorem~\ref{main} on $\{b_2, b_3\}, \ldots, \{b_{q-1}, \ldots , b_t\}$ follows from the original hypotheses. If $b_1\mid M_i$, for any $i$, by hypothesis, $b_2 \mid M_i$. If $b_1 \mid \widehat{M_i}$ for some $i$, then by definition $b_0\mid M_i$ and again by hypothesis $b_1 \mid M_i$.  It follows that $b_2$ divides both $M_i$ and $\widehat{M_i}$ and so condition $(2)$ of Theorem~\ref{main} holds for $\{b_1,b_2\}$. The result now follows.
\end{proof}

It is interesting to note that in the situation of Theorem~\ref{iterate repeated neighbors} the given set of generators for $H$ is a minimal generating set. In the next Theorem we show that various combinations of initially regular sequences could be combined to give longer initially regular sequences.

\begin{theorem}\label{generalized iterate}
Let $I$ be a monomial ideal in a polynomial ring $R$. Let $f_1, \ldots, f_{q}$ be an initially regular sequence on $R/I$ as in Theorem~\ref{iterate repeated neighbors} with $f_1, \ldots, f_q \subset k[b_0, \ldots, b_t]$. Let $g_1, \ldots, g_{\ell}$ be an initially regular sequence on $R/I$ satisfying the conditions of Theorem~\ref{main} and assume that $g_1, \ldots, g_{\ell} \in k[y_1, \ldots, y_{r}]$, where $\{y_1, \ldots, y_{r}\} \cap \{b_0, \ldots, b_t\}=\emptyset$. Then any sequence obtained by merging a subsequence of $f_1, \ldots, f_q$  and a subsequence of $g_1, \ldots, g_{\ell}$ in any order such that whenever $f_i$ and $f_j$ appear, then $f_i$ precedes $f_{j}$ for all $j \geq i$, is an initially regular sequence on $R/I$. 
\end{theorem}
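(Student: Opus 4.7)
The plan is to prove the statement by strong induction on the length $n$ of the merged sequence, which I denote $h_1, h_2, \ldots, h_n$. Let $J_k$ be the ideal obtained after processing the first $k-1$ elements, so $J_1 = \ini(I) = I$ since $I$ is already monomial, and $J_k = \ini(J_{k-1}, h_{k-1})$ for $k \geq 2$. The task at step $k$ is to verify that $h_k$ is a nonzerodivisor on $R/J_k$.

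The key structural fact driving the induction is the disjointness of the variable sets $B = \{b_0, \ldots, b_t\}$ and $Y = \{y_1, \ldots, y_r\}$. This disjointness allows $J_k$ to be viewed as an $(R_1, R_2)$-factorable ideal in two complementary ways, depending on whether one groups $B$ with $R_1$ or with $R_2$. By Proposition~\ref{GrobnerArgument}, in either factorization the minimal generators of $J_k$ take the form $N \cdot h$, where $N$ is a least common multiple of $R_1$-monomial factors coming from generators of $I$ together with leading terms of previously processed $h_i$'s, and $h$ lies in $R_2$.

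If $h_k$ is the next unused $f_p$ in the chain, I apply the factorization with $R_2 = k[Y]$ and with $R_1$ generated by the remaining variables of $R$. Since the processed $g_j$'s live entirely in $R_2$, their effect on the $R_1$-part of the generators is controlled exactly as in Proposition~\ref{append a seq}, and Lemma~\ref{degreesFixed} ensures that the degree condition on the remaining $b$-variables is preserved. A generator of $J_k$ divisible by the leading $b$-variable for step $p$ must then trace back to a generator of the ideal $\ini(I, f_1, \ldots, f_{p-1})$ with the same property, at which point the argument inside the proof of Theorem~\ref{iterate repeated neighbors} supplies the required neighboring variable, and $f_p$ is regular on $R/J_k$. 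Symmetrically, if $h_k = g_s$ is an unused $g_j$, I use the dual factorization with $R_1 = k[B]$ and $R_2$ generated by the remaining variables (which include $Y$); the processed $f_i$'s contribute only leading terms in $k[B]$ that do not interfere with the conditions of Theorem~\ref{main} on $g_s$, so this case is essentially an application of Proposition~\ref{append a seq}.

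The main obstacle is the bookkeeping: verifying that for an arbitrary interleaving the minimal generators of $J_k$ can be traced precisely enough to confirm both the degree hypothesis~(1) and the divisibility hypothesis~(2) from Theorems~\ref{main} and \ref{iterate repeated neighbors}. The $(R_1, R_2)$-factorability framework is robust enough to support both viewpoints simultaneously, but care is needed because Theorem~\ref{iterate repeated neighbors} allows reuse of $b$-variables along the chain, so generators of $J_k$ may carry the effect of multiple processed $f$'s and $g$'s; this is handled by iterating Proposition~\ref{GrobnerArgument} and Corollary~\ref{degrees} at each step of the induction.
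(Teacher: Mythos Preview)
Your plan is essentially the paper's proof: both argue by induction on the length of the merged sequence and verify the hypotheses of Lemma~\ref{regular} for the last element $\alpha_u$ by iterating Lemma~\ref{degreesFixed} for condition~(a) and tracing the $R_1$-monomial factor of a generator of $J_k$ back to $I$ via Proposition~\ref{GrobnerArgument} and Lemma~\ref{gens H} for condition~(b). One small correction to your bookkeeping: with your choice $R_2 = k[Y]$ (in the $f_p$ case) or $R_1 = k[B]$ (in the $g_s$ case), the ideal $(J_j,\alpha_j)$ fails to be $(R_1,R_2)$-factorable at those steps where $\alpha_j$ is a non-monomial element of $R_1$, so Proposition~\ref{GrobnerArgument} does not apply there as stated. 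The paper sidesteps this by taking $R_1$ to be generated only by the variables of the current element $\alpha_u$, so that every earlier $\alpha_j$ automatically lies in $R_2$; the chain behavior for the $f$'s is then handled step by step with Lemma~\ref{gens H} rather than through a single global factorization.
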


\begin{proof}
Let $\alpha_1, \ldots, \alpha_u$ be a sequence, where each $\alpha_i$ is either $f_{j}$ or $g_{{j}}$ for some $j$. Let $I_1=I$ and $I_i=\ini(I_{i-1}, \alpha_{i-1})$ for all $ 2\leq i \leq u$. It suffices to show that $\alpha_{u}$ is regular on $R/I_{u}$. We will show that the conditions of Lemma~\ref{regular} are satisfied. To simplify notation let us write $\alpha_i=z_{i,0}+\ldots +z_{i, t_i}$, where either $z_{i,j}\in\{b_0,\ldots,b_t\}$ for all $j$ or $z_{i,j}\in \{y_1,\ldots,y_r\}$ for all $j$ as appropriate. By applying Lemma~\ref{degreesFixed} repeatedly we have $d_{z_{u,j}}(I_{u}) \leq d_{z_{u,j}}(I)\leq 1$, for all $ 1\leq j \leq t_u$, by our assumptions.  Hence, condition $($a$)$ of Lemma~\ref{regular} is satisfied.

Suppose that $a_u=g_i$ for some $i$. Let $M$ be a minimal monomial generator of $I_u$ such that  that $z_{u,0} \mid M$. By repeated use of Proposition~\ref{GrobnerArgument} with $R_1=k[z_{u,0}, \ldots, z_{u,t_u}]$, we can assume that $z_{u,0} \mid N$, where $N$ is an $R_1$-monomial factor of a minimal generator of $I_1=I$ and $N \mid M$. Thus by our assumption, $z_{u,0}\mid N$ implies $z_{u,j} \mid N$ for some $j$. Therefore, $z_{u,j}\mid M$ and condition $(b)$ of Lemma~\ref{regular} is satisfied.

Now suppose $\alpha_u=f_i$ for some $i$ and $M$ is a minimal monomial generator of $I_u$ such that $z_{u,0}=b_{i-1} \mid M$. If $\alpha_{j}\neq f_{i-1}$ for all $j<u$, then the result follows as in the case $\alpha_u=g_i$. Suppose then that $\alpha_{j}=f_{i-1}$ for some $1\leq j\leq u-1$. By applying Proposition~\ref{GrobnerArgument} repeatedly we have that $b_{i-1} \mid N$, where $N$ is an $R_1$-monomial factor of a minimal monomial generator of $I_{j+1}$. By Lemma~\ref{gens H} either $b_{i-1}$ or $b_{i-2}$ divides $L$, where $L$ is a minimal monomial generator of $I_j$. Continuing in this manner using Proposition~\ref{GrobnerArgument} and Lemma~\ref{gens H} as appropriate we have that, for some $w\leq i-1$,  $b_{w}$ divides $N'$, where $N'$ is an $R_1$-monomial factor of a minimal generator of $I$. 
By our assumptions if $b_{w} \mid N'$, then $z_{u,0}=b_{i-1} \mid N'$ as well. Hence by our assumptions on $f_i$, there exists $v>0$ such that $z_{u,v} \mid N'$. Therefore, $z_{u,v}\mid M$ and condition~$($b$)$ of Lemma~\ref{regular} is again satisfied.
\end{proof}

Notice that Theorem~\ref{generalized iterate} can be extended to allow multiple initially regular sequences to be merged. The proof follows in the same manner.

\begin{remark}\label{longer sequence remark}
Let $\underline{f_1}, \ldots, \underline{f_s}$ be a collection of initially regular sequences that satisfy the conditions of Theorem~\ref{iterate repeated neighbors}.  Let $\underline{g}$ be an initially regular sequence that satisfies the conditions of Theorem~\ref{main}.  Suppose that the variables in each of the sequences $\ul{f_1}, \ldots, \underline{f_s}, \ul{g}$ are disjoint from each other. Then any sequence obtained by merging any subsequences of $\ul{f_1}, \ldots, \ul{f_q}$  and a subsequence of $\ul{g}$ in any order such that whenever $f_{i,j}$ and $f_{i,r}$ appear, then $f_{i,j}$ precedes $f_{i,r}$ for all $r \geq j$, is an initially regular sequence on $R/I$.
\end{remark}

Before proving the final result of the section, we give a series of examples. In the first example, as an immediate application of Theorem~\ref{iterate repeated neighbors}, we obtain a sharp bound for the depth of a tetrahedron. It is worth noting that none of the previously known combinatorial bounds were able to capture the exact value for this example.

\begin{example}\label{tetrahedron}
Let $R=\mathbb{Q}[a,b,c,d]$ and let $I=(abcd)$ be the edge ideal corresponding to the hypergraph of a tetrahedron depicted below.

\begin{tikzpicture}

    \tikzstyle{point}=[circle,thick,draw=black,fill=black,inner sep=0pt,minimum width=4pt,minimum height=4pt]

 \node (a)[point,label={[label distance=-0.7cm]1:$a$}] at (1,0) {};

    \node (b)[point,label={[label distance=0cm]1:$b$}] at (3,0) {};

\node (c)[point,label={[label distance=0cm]1:${c}$}] at (2,1) {};

    \node (d)[point,label={[label distance=0cm]3:$d$}] at (2,2) {};

    \draw[pattern=north east lines] (a.center) -- (c.center) -- (b.center) -- cycle;

    \draw[pattern=north east lines] (a.center) -- (c.center) -- (d.center) -- cycle;

    \draw[pattern= north east lines]   (b.center) -- (c.center) -- (d.center) -- cycle;

\draw(a.center)--(c.center);
\draw(a.center)--(d.center);
\draw(a.center)--(b.center);

\end{tikzpicture}

It is easy to see that $\depth R/I=3$. However, the  known combinatorial bound of \cite[Theorem~3.2]{DS2} gives at most $\depth R/I \geq 1$.  It follows immediately by Theorem~\ref{iterate repeated neighbors} that $a+b, b+c, c+d$ is both a regular and an initially regular sequence on $R/I$ with respect to a term order with $a>b>c>d$, and that $\depth R/I\geq 3$.
\end{example}

In the next example we consider the case of the edge ideal of an octagon. It is worth noting here that we can exhibit a regular sequence that accurately computes the depth, however the fact that the last term of the sequence is regular on the appropriate module does not follow from any of our results. Therefore, there are other regular and initially regular sequences that one can compute and more work can be done in the direction of fully understanding how to construct such sequences.

\begin{example}\label{octagon}
 Let $I=(x_1x_2,x_2x_3,x_3x_4,x_4x_5,x_5x_6, x_6x_7,x_7x_8,x_1x_8) \subseteq R = \mathbb{Q}[x_1, \ldots, x_8]$
be the edge ideal of the graph  of the octagon depicted below.

\begin{tikzpicture}

    \tikzstyle{point}=[circle,thick,draw=black,fill=black,inner sep=0pt,minimum width=4pt,minimum height=4pt]

 \node (a)[point,label={[xshift=0.4cm, yshift=-0.3 cm]: $x_1$}] at (1.7,2.7) {};

    \node (b)[point,label={[xshift=0.4cm, yshift=-0.3 cm]: $x_2$}] at (2.4,2) {};

\node (c)[point,label={[xshift=0.4cm, yshift=-0.3 cm]: $x_3$}] at (2.4,1) {};

    \node (d)[point,label={[xshift=0.4cm, yshift=-0.3 cm]: $x_4$}] at (1.7,.3) {};

\node (e)[point,label={[xshift=-0.4cm, yshift=-0.3 cm]: $x_5$}] at (0.7,0.3) {};

    \node (f)[point,label={[xshift=-0.4cm, yshift=-0.3 cm]: $x_6$}] at (0,1) {};
\node (g)[point,label={[xshift=-0.4cm, yshift=-0.3 cm]: $x_7$}] at (0,2) {};

    \node (h)[point,label={[xshift=-0.4cm, yshift=-0.3 cm]: $x_8$}] at (0.7,2.7) {};

\draw(a.center) -- (b.center) -- (c.center) -- (d.center) -- (e.center) -- (f.center) -- (g.center) -- (h.center) -- (a.center);
\end{tikzpicture}

First we note that $\depth R/I=3$. Using Theorem~\ref{main} we can only create a maximal initially regular sequence of length two on $R/I$. For example, let $f=x_2+x_1+x_3$ and $g= x_5+x_6+x_4$ and notice that $f, g$  is an initially regular sequence on $R/I$ with respect to any term order such that $x_2>x_1>x_3>x_5>x_6>x_4$. Moreover, $f, g$ is a regular sequence on $R/I$ as can be verified by Macaulay~2~\cite{M2}.

In search for a third element to complete our regular sequence we note that the only variables that were not used are $x_7,x_8$. But neither $x_7+x_8+x_6$ nor $x_8+x_7+x_1$ are regular on $R/(I, f,g)$ or initially regular on $R/(\ini(I,f),g)$.
However, using Macaulay~2~\cite{M2} for instance we can see that $h=x_7+x_8+x_6+x_1$ is regular on $R/(I, f,g )$. Moreover, $f,g,h$ is both a regular and an initially regular sequence on $R/I$ with respect to the any term order such that $x_7>x_2>x_1 >x_3 > x_5>x_6>x_4>x_8$.
\end{example}

In the next example, we shall see that when there is a freedom of choice in Algorithm~\ref{alg}, our bound on the depth can at times be made to be the actual value.

\begin{example} \label{hypergraph example}
Let $R=\mathbb{Q}[a,b,c,d,e,f,g,h]$ and let $I=(abc,acd,bcd,de,efgh)$ be the edge ideal of the following hypergraph.

\begin{tikzpicture}

    \tikzstyle{point}=[circle,thick,draw=black,fill=black,inner sep=0pt,minimum width=4pt,minimum height=4pt]

 \node (a)[point,label={[label distance=-0.7cm]1:$a$}] at (1,0) {};

    \node (b)[point,label={[label distance=0cm]1:$b$}] at (3,0) {};

\node (c)[point,label={[label distance=0cm]1:$\bf{c}$}] at (2,1) {};

    \node (d)[point,label={[label distance=0cm]3:$d$}] at (2,2) {};

    \node(e)[point,label={[label distance=-0.5cm]3:$e$}] at (4,2) {};

   \node(f)[point,label={[label distance=0cm]1:$f$}] at (6,2) {};

   \node(g)[point,label={[label distance=0cm]1:$g$}] at (5,3) {};

   \node(h)[point,label={[label distance=0cm]1:$h$}] at (5,2.4) {};

    \draw[pattern=north east lines] (a.center) -- (c.center) -- (b.center) -- cycle;

    \draw[pattern=north west lines] (a.center) -- (c.center) -- (d.center) -- cycle;

    \draw[pattern=vertical lines]   (b.center) -- (c.center) -- (d.center) -- cycle;

    \draw[pattern=dots] (e.center) -- (f.center) -- (g.center) --(e.center) -- cycle;
\draw[thick, dashed] (f.center) -- (h.center);
\draw[thick, dashed] (e.center) --(h.center);
\draw[thick, dashed](g.center)--(h.center);

    \draw (d.center) -- (e.center);

\draw (f.center) -- (g.center);
\draw (f.center)--(e.center);
\draw (f.center)--(h.center);
\draw(a.center)--(c.center);
\draw(a.center)--(d.center);
\draw(a.center)--(b.center);

\end{tikzpicture}

Note that $f+h, h+g, g+e, a+c, c+b+d$ is both a regular and an initially regular sequence on $R/I$ by Theorems~\ref{iterate repeated neighbors} and \ref{longer sequence remark} with respect to any order such that $f>h>g>e$, $a>c>b$, and $c>d$. Hence, $\depth R/I \geq 5$ and computations on Macaulay~2~\cite{M2} show that this is actually an equality.
\end{example}

The final result of this section shows that the method of creating initially regular sequences produces a bound that can be effectively combined with the use of polarization when bounding the depths of non-squarefree monomial ideals. That is, the bound produced will be sufficiently large to at least recover the number of polarizing variables. Note that the prior known depth bound for general hypergraphs, using dominating parameters, is not generally effective when combined with this technique due to the nature of polarization. By definition, hyperedges of the polarization that contain polarizing variables will also contain the corresponding original variables, creating a situation where it is relatively easy for a few edges to dominate many others.

\begin{theorem}\label{polarization}
Let $I$ be a monomial ideal in $R=k[x_1, \ldots, x_n]$ and let $I^{\pol}\subset R^{\pol}$ be its polarization. Then the maximal length of an initially regular sequence on $R^{\pol}/I^{\pol}$ is at least $\sum_{i=1}^n(d_{x_i}(I)-1)$, which is the number of polarizing variables.
\end{theorem}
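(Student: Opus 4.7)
The plan is to produce, for each original variable $x_i$ with $d_i:=d_{x_i}(I)\ge 2$, a length-$(d_i-1)$ initially regular sequence on $R^{\pol}/I^{\pol}$ built solely from the polarizing variables for $x_i$ together with $x_{i,1}$, and then to merge the resulting chains across all $i$. Recall that $I^{\pol}$ is a squarefree monomial ideal, and by the very definition of polarization every minimal generator of $I^{\pol}$ that is divisible by $x_{i,j}$ with $j\ge 2$ is also divisible by $x_{i,j-1}$. This ``descending divisibility'' property is the one combinatorial fact driving the whole argument.

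Fix $i$ with $d_i\ge 2$, and choose a term order under which $x_{i,d_i}>x_{i,d_i-1}>\cdots>x_{i,1}$. Setting $b_\ell=x_{i,d_i-\ell}$ for $0\le\ell\le d_i-1$, I would verify that each consecutive pair $\{b_{\ell-1},b_\ell\}$ satisfies hypotheses~$(1)$ and~$(2)$ of Theorem~\ref{main} relative to $I^{\pol}$: condition~$(1)$ is automatic because $I^{\pol}$ is squarefree, and condition~$(2)$ is precisely the divisibility property noted above. Theorem~\ref{iterate repeated neighbors}, applied with $q=t=d_i-1$, then yields that the binomials $x_{i,j}+x_{i,j-1}$ for $j=2,\ldots,d_i$ form a regular and initially regular sequence of length $d_i-1$ on $R^{\pol}/I^{\pol}$.

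To conclude, I would merge these per-$i$ chains using a single term order on $R^{\pol}$ that simultaneously respects $x_{i,d_i}>\cdots>x_{i,1}$ for every $i$ (any block order doing so will work). Since the variable sets of distinct chains are pairwise disjoint, Remark~\ref{longer sequence remark} assembles them into one initially regular sequence of total length $\sum_i(d_i-1)$; if one prefers the statement of the remark with a nontrivial piece from Theorem~\ref{main}, one can designate any single pair $\{x_{i,d_i},x_{i,d_i-1}\}$ to play that role, since such a length-one chain fits the framework of Theorem~\ref{main} directly. The main technical checkpoint is really the initial divisibility observation: once that is in place, everything is a clean application of the machinery already developed in Sections~\ref{sec.construction} and~\ref{extensions}. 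I do not anticipate any serious obstacle beyond bookkeeping with indices and the term order.
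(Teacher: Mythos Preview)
Your proposal is correct and follows essentially the same route as the paper: you exploit the descending divisibility property of polarization to feed each chain $x_{i,d_i}+x_{i,d_i-1},\dots,x_{i,2}+x_{i,1}$ into Theorem~\ref{iterate repeated neighbors}, and then merge the pairwise disjoint chains via Remark~\ref{longer sequence remark}. The only cosmetic difference is that the paper also cites Theorem~\ref{generalized iterate} alongside the remark when gluing, whereas you invoke the remark directly; either is fine.
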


\begin{proof}
Set $d_i=d_{x_i}(I)$. Then $x_i$ is polarized by variables $x_i, x_{i,2}, x_{i,3}, \ldots , x_{i,d_i}$. Set $x_{i,1}=x_i$ for ease of notation. Let $R^{\pol}=R[x_{i,1}, \ldots, x_{i,d_i} \mid 1\leq i \leq n]$ and let $I^{\pol}$ denote the polarization of $I$ in $R^{\pol}$. Then by the definition of polarization, for any $1\leq i\leq n$, $d_{x_{i,j}}(I^{\pol})=1$ for all $1\leq j \leq d_i$ and  if $x_{i,j}$ divides a monomial generator $M$ of $I^{\pol}$, then $x_{i,k}$ divides $M$ for all $1\leq k \leq j$. In particular, the sets $\{ x_{i,j}, x_{i,j-1}\}$ satisfy the conditions of Theorem~\ref{iterate repeated neighbors} for $2 \leq j \leq d_i$. By Theorem~\ref{iterate repeated neighbors}, the elements
$$x_{i,d_i}+x_{i,d_i -1}, x_{i,d_i -1}+x_{i,d_i -2}, \ldots, x_{i,2}+x_{i,1}$$
form an initially regular sequence on $R^{\pol}/I^{\pol}$ with respect to an appropriate term order. 
By Theorem~\ref{generalized iterate} and Remark~\ref{longer sequence remark} we have that 
$$x_{1,d_1}+x_{1,d_1 -1}, x_{1,d_1 -1}+x_{1,d_1 -2}, \ldots, x_{1,2}+x_{1,1}, x_{2,d_2}+x_{2,d_2-1}, \ldots , x_{n,2}+x_{n,1}$$
form an initially regular sequence on $R^{\pol}/I^{\pol}$ with respect to an appropriate term order. 
\end{proof}

Theorem~\ref{polarization} illustrates the power of the choices made when forming initially regular sequences. The goal is to produce the longest possible initially regular sequence by a judicious choice of elements satisfying the hypotheses of Theorem~\ref{main} and its extensions. When this maximal length is greater than the minimum guaranteed by Theorem~\ref{polarization}, a positive lower bound for the depth of the original monomial ideal results.

\begin{example}\label{path}
Let $R=\mathbb{Q}[a,b,c]$ and let $I=(ab,bc)$ be the edge ideal of the graph of a path of length $2$ depicted below.

\begin{tikzpicture}

    \tikzstyle{point}=[circle,thick,draw=black,fill=black,inner sep=0pt,minimum width=4pt,minimum height=4pt]

 \node (a)[point,label={[label distance=0cm]3:$a$}] at (0.5,2) {};

    \node (b)[point,label={[label distance=0cm]3:$b$}] at (2,2) {};

\node (c)[point,label={[label distance=0cm]3:$c$}] at (3.5,2) {};

 \draw (a.center) -- (b.center);
\draw(b.center)--(c.center);

\end{tikzpicture}

Consider the ideal $H=I^2=(a^2b^2, ab^2c,b^2c^2)$. Notice that since $d_{a}(I)=d_{b}(I)=d_c(I)=2$ we may not use any of our previous results to obtain any regular or initially regular elements on $R/I$.

We will use the method of polarization to obtain a bound on the depth of $R/I^2$. Let $a_1, b_1,c_1$ be polarizing variables for $a,b$, and $c$, respectively. Then
$$H^{\pol}=(aa_1bb_1, abb_1c, bb_1cc_1) \subseteq R[a_1, b_1, c_1]=R^{\pol}.$$
By Theorem~\ref{main}, Theorem~\ref{iterate repeated neighbors}, and Theorem~\ref{generalized iterate} we have that $c_1+c, a_1+a, a+b, b+b_1$ is both a regular and an initially regular sequence on $R^{\pol}/H^{\pol}$ with respect to a term order such that $c_1>c, a_1>a>b>b_1$. Hence, $\depth R^{\pol}/H^{\pol}\geq 4$ and therefore, $\depth R/I^2 \geq 4-3=1$, by \cite[Corollary~1.6.3]{HH}. Finally, we can verify that $\depth R/I^2 =1$, using Macaulay~2~\cite{M2}. Notice that the prior known depth bound for general hypergraphs yields $\depth R^{\pol}/H^{\pol}\geq 1$, which is not large enough to account for the three polarizing variables.
\end{example}

This last example shows how our results on initially regular sequences and the technique of polarization can lead to estimates on the depth of higher powers of monomial ideals. However, the bounds obtained are highly dependent on the structure of the original monomial ideal.  In our final section we return to the case of non-monomial ideals.

\section{Applications to non-monomial classes of ideals}\label{applications}

When forming an initially regular sequence on $R/I$ for a general ideal $I$, the first step of the algorithm is to find the initial ideal of $I$ with respect to a convenient term order. In Set-up~\ref{setup1}, it was assumed that this step had already been performed, thus allowing us to focus on monomial ideals. In this section, we return the focus to general ideals in polynomial rings. We provide selected examples of interesting classes of ideals for which there is a known Gr\"{o}bner basis and illustrate how our results can be applied.

\subsection{Coding Theory and Oriented Directed Graphs}

Recent work studying algebraic properties of edge ideals and weighted oriented graphs was motivated by coding theory. Reed-Muller codes are associated to certain projective spaces over finite fields. Connections between algebraic properties of the associated vanishing ideals and code invariants have been studied by a variety of authors. It was shown in \cite{RT} that the vanishing ideals of these projective spaces can be generated by a set of binomials that form a Gr\"{o}bner basis whose resulting initial ideal is precisely the edge ideal of a weighted oriented graph.  The Cohen-Macaulay property of these ideals was studied in \cite{GMSVV, Oaxaca}. Using our techniques we can construct initially regular sequences that bound the depths of the edge ideals of general weighted oriented graphs.

\begin{example}\label{oriented-directed-graph}
Let $R=\mathbb{Q}[x_1, x_2,x_3, y_1, y_2, y_3]$ and let $I=(x_1y_1, x_1^2x_2,x_2^2y_2,x_2^2x_3,x_3y_3,x_1x_3)$ be the edge ideal of the weighted oriented graph depicted below.

\begin{tikzpicture}[shorten >=1pt]

\tikzstyle{point}=[circle,thick,draw=black,fill=black,inner sep=0pt,minimum width=4pt,minimum height=4pt]

 \node (a)[point,label={[xshift=0.1cm, yshift=-0.7 cm]:$y_1$}] at (0.2,0) {};

    \node (b)[point,label={[xshift=0.1cm, yshift=-0.7 cm]:$x_1$}] at (1,0) {};

\node (c)[point,label={[xshift=0.1cm, yshift=-0.7 cm]:$x_2$}] at (2,0) {};

\node (d)[point,label={[xshift=0.1cm, yshift=-0.7 cm]:$y_2$}] at (2.8,0) {};

\node (e)[point,label={[label distance=0cm]1:$x_3$}] at (1.5,1) {};

\node (f)[point,label={[label distance=0cm]1:$y_3$}] at (1.5,1.8) {};

\node at (.8,0.25) {$2$};
\node at (2.2,0.25) {$2$};

\path[->, draw, thick]
(b) edge (a)
(c) edge (b)
(b) edge (e)
(e) edge (f)
(e) edge (c)
(d) edge (c);

\end{tikzpicture}

 Using $a$ as a polarizing variable for $x_1$ and $b$ as a polarizing variable for $x_2$, we have $I^{\pol}=(x_1y_1, x_1ax_2,x_2by_2,x_2bx_3,x_3y_3,x_1x_3)$. By Corollary \ref{regular sums of two} and Theorem \ref{main}, $y_1+x_1, y_3+x_3$ is a regular sequence with respect to $R^{\pol}/I^{\pol}$. Combining this with Theorems \ref{main}, \ref{iterate repeated neighbors}, and \ref{generalized iterate} it follows that $y_1+x_1, y_3+x_3, y_2+x_2, x_2+a+b$ is an initially regular sequence of $R^{\pol}/I^{\pol}$. Thus, $\depth R/I \geq 4-2=2$. Note that $\dim R/I = 3$ and $R/I$ is not Cohen-Macaulay by \cite[Theorem 1.1]{Oaxaca}, so $\depth R/I = 2$. 
\end{example}

\subsection{Binomial Edge Ideals}

Although the focus of this paper has been monomial edge ideals, there is also a toric ideal $J$, also called the binomial edge ideal, associated to a graph. Starting with a generic $2 \times n$ matrix 
$$\left[ \begin{array}{ccc}
x_1 & \cdots & x_n\\
y_1 & \cdots & y_n
\end{array} \right]$$
whose columns are indexed by the vertices of a graph $G$, the binomial edge ideal is generated by the set of $2\times 2$ minors corresponding to pairs of columns $(i,j)$ whenever $(x_i,x_j) \in E(G)$. There are known Gr\"{o}bner bases and a universal Gr\"{o}bner basis for such ideals (see \cite{HHHKR}). Moreover, the initial ideals obtained from these Gr\"obner bases can be determined by examining paths in the graph. In general, one can directly apply our results to the hypergraphs associated to these initial ideals to find bounds for the depth of a binomial edge ideal. The resulting initially regular sequences often correspond to paths and leaves in the given graph.

\begin{example}\label{binomial-edge-ideal}
Let $J=(x_1y_2-x_2y_1, x_2y_3-x_3y_2, x_3y_4-x_4y_3)$ be the binomial ideal associated to the path of length $3$ in $R=k[x_1, \ldots, x_4,y_1, \ldots, y_4]$, whose standard edge ideal is given by $I=(x_1x_2,x_2x_3,x_3x_4)$ in $k[x_1, \ldots, x_4]$. By \cite[Theorem 2.1]{HHHKR} the initial ideal of $J$ is given by $\ini(J)=(x_1y_2, x_2y_3, x_3y_4)$, with respect to the lexicographic term order in which $x_1 > \dots > x_4 > y_1 > \dots > y_4$. Theorem~\ref{main} shows that $x_1+y_2, x_2+y_3, x_3+y_4$ is an initially regular sequence of $R/J$. Note that $x_4$ and $y_1$ are free variables in $R/\ini(J)$ and thus $x_4, y_1$ is a regular sequence on $R/\ini(J)$.
These together give an initially regular sequence of $R/J$ of length 5, namely, $x_4, y_1, x_1+y_2, x_2+y_3, x_3+y_4$. This implies that $\depth R/J \ge 5$. Computation with Macaulay~2~\cite{M2}  indeed shows that $\depth R/J = 5$.
\end{example}

\subsection{Blowup Algebras of Edge Ideals}

In \cite{V}, Villarreal gave an explicit description of the defining ideal, also known as the ideal of equations, of the Rees algebra of the edge ideal of any graph in terms of the primitive even closed walks of the graph. Passing to the fiber cone, $\mathcal{F}$, eliminates the linear syzygies in this ideal of equations, resulting in a toric ideal that is the ideal of equations of $\mathcal{F}$. Moreover, this binomial ideal is known to form a universal Gr\"{o}bner basis for the ideal (see for example \cite{TT}). Given a graph, one can form an associated hypergraph using the even closed walks (and an appropriate term order) and apply the results of this paper to obtain a lower bound on the depth of the fiber cone.

\begin{example}\label{Rees}
Consider the graph 

\begin{tikzpicture}[node distance=1cm, on grid, auto]

\tikzstyle{point}=[circle,thick,draw=black,fill=black,inner sep=0pt,minimum width=4pt,minimum height=4pt]

 \node (a)[point,label={[xshift=-0.1cm, yshift=-0.7 cm]:$x_1$}] at (.5,0) {};

\node (b)[point,label={[xshift=-0.1cm, yshift=0.0 cm]:$x_2$}] at (0.5,1.4) {};

    \node (c)[point,label={[xshift=0.1cm, yshift=-0.7 cm]:$x_3$}] at (2,0.7) {};

\node (g)[point,label={[xshift=0.1cm, yshift=-0.7 cm]:$x_7$}] at (3.5,0) {};

\node (f)[point,label={[xshift=0.3cm, yshift=-0.4 cm]$x_6$}] at (4.7,0) {};

\node (e)[point,label={[xshift=0.2cm, yshift=-0.2 cm]:$x_5$}] at (4.7,1.4) {};

\node (d)[point,label={[xshift=-0.3cm, yshift=0.2 cm]1:$x_4$}] at (3.5,1.4) {};

\path[draw, thick]
(a) edge node {$T_1$} (b)
(b) edge node[above] {$T_2$} (c)
(a) edge node[below] {$T_3$} (c)
(c) edge node[above] {$T_4$} (d)
(c) edge node[below] {$T_5$} (g)
(d) edge node {$T_6$} (g)
(d) edge node {$T_7$} (e)
(e) edge node{$T_8$} (f)
(f) edge node {$T_9$} (g);

\end{tikzpicture}

\noindent corresponding to $I=(x_1x_2, x_2x_3, x_1x_3, x_3x_4, x_3x_7, x_4x_7, x_4x_5, x_5x_6, x_6x_7).$
 The ideal of equations of the fiber cone $\mathcal{F}=\mathcal{F}(I)$ is given by $J=(T_1T_4T_5-T_2T_3T_6, T_6T_8-T_7T_9)$. The initial ideal is $\ini(J)=(T_1T_4T_5, T_6T_8)$ with respect to the order $T_1>T_2> \ldots >T_8$.  By Theorems~\ref{main} and ~\ref{iterate repeated neighbors}, $T_2,T_3,T_7,T_9,T_6+T_8,T_1+T_4,T_4+T_5$ is an initially regular sequence, and so $\depth \mathcal{F} \geq 7$. Using Macaulay 2~\cite{M2} it can be verified that the depth is precisely $7$.
\end{example}

\end{document}